\theoremstyle{plain}
\newtheorem{theorem}{Theorem}[section]
\newtheorem{lemma}[theorem]{Lemma}
\newtheorem{prop}[theorem]{Proposition}
\newtheorem{cor}[theorem]{Corollary}
\newtheorem{fact}[theorem]{Fact}
\theoremstyle{definition}
\newtheorem{defn}[theorem]{Definition}
\newtheorem{example}[theorem]{Example}
\newcommand{\K}{\mathcal K}
\newcommand{\m}{\mathfrak m}
\newcommand{\M}{\mathcal M}
\newcommand{\ntp}{NTP\textsubscript{2} }
\newcommand{\Ov}{\mathcal O_v}
\DeclareMathOperator{\ac}{ac}
\DeclareMathOperator{\bdn}{bdn}
\DeclareMathOperator{\id}{id}
\DeclareMathOperator{\supp}{supp}
\DeclareMathOperator{\tp}{tp}
\DeclareMathOperator{\Th}{Th}
\newcommand{\VF}{{\operatorname{VF}}}
\newcommand{\VG}{{\operatorname{VG}}}
\newcommand{\RF}{{\operatorname{RF}}}
\renewcommand{\L}{\mathcal L}
\newcommand{\Ldiv}{{\L_{\operatorname{div}}}}
\newcommand{\Lpas}{{\L_{\operatorname{Pas}}}}
\newcommand{\Lring}{{\L_{\operatorname{ring}}}}
\newcommand{\Lvg}{{\L_\VG}}
\newcommand{\Lrf}{{\L_\RF}}
\newcommand{\newcase}[1]{\bigskip\noindent\emph{Case #1:}}
\newcommand{\nogapcase}[1]{\noindent\emph{Case #1:}}
\begin{document}

\title{Burden of henselian valued fields in the {Denef-Pas} language}
\author{Peter Sinclair}
\address{Peter Sinclair, Department of Mathematics, Douglas College, 700 Royal Ave, New Westminster, British Columbia V3M 5Z5, Canada}
\email{sinclairp@douglascollege.ca}
\keywords{Valued fields, henselian, Denef-Pas language, burden, dp-rank, NTP2}

\begin{abstract}
	Motivated by the Ax-Kochen/Ershov principle, a large number of questions about henselian valued fields have been shown to reduce to analogous questions about the value group and residue field. In this paper, we investigate the burden of henselian valued fields in the three-sorted Denef-Pas language. If $T$ is a theory of henselian valued fields admitting relative quantifier elimination (in any characteristic), we show that the burden of $T$ is equal to the sum of the burdens of its value group and residue field. As a consequence, $T$ is NTP\textsubscript{2} if and only if its residue field and value group are; the same is true for the statements ``$T$ is strong'' and ``$T$ has finite burden.''
\end{abstract}

\maketitle

\begin{center}
\begin{minipage}{.7\textwidth}
\setcounter{tocdepth}{1}
\tableofcontents
\end{minipage}
\end{center}


\section{Introduction}
\label{introduction}


The Ax-Kochen/Ershov principle states that the theory of a henselian valued field of equicharacteristic 0 is completely determined by the theory of its residue field and value group. It has since been extended to apply to certain henselian valued fields with positive residue characteristic (see \cite{AJ19}, \cite{Bel99}, \cite{Kuhl16}), and is a good heuristic even in fields where it does not hold: many useful properties of henselian valued fields are witnessed by some combination of the residue field and value group.

One area where this line of thinking has been particularly effective is in the classification theory of Shelah \cite{She90}, using combinatorial properties such as NIP (not the independence property, sometimes called dependence) and \ntp (not the tree property of the second kind). Delon \cite{Del78} showed that a henselian valued field of equicharacteristic zero is NIP if and only if its residue field is; this result was later extended to certain valued fields of positive residue characteristic in \cite{Bel99} and \cite{JS20}. In \cite{Cher14}, Chernikov showed an analogous result for \ntp in equicharacteristic zero.

Chernikov's result uses Adler's notion of burden, which measures the complexity of types using independent partition patterns (inp-patterns, described in Section \ref{inppatterns} below). Burden is equivalent to weight in simple theories and to dp-rank in NIP theories, and can be used to measure the complexity of a theory by considering the partial type $\{x=x\}$: a theory is \ntp if and only if there is a cardinal $\kappa$ such that every inp-pattern in $\{x=x\}$ has depth at most $\kappa$. Chernikov showed that in the three-sorted Denef-Pas language, the burden of a henselian valued field of equicharacteristic zero can be bounded by the burden of the induced structure on the residue field and value group, from which the \ntp transfer principle follows immediately.

The main result of this paper improves the precision of Chernikov's bound and extends the result to apply to theories of any characteristic, provided the theory eliminates quantifiers in the field sort.

{
	\def\thetheorem{\ref{mainthm}}
	\begin{theorem}
		Suppose $T$ is a theory of henselian valued fields in $\Lpas$ admitting relative quantifier elimination. Then
		\[ \bdn(T) = \bdn(T_{\VG}) + \bdn(T_{\RF}) ,\]
		where $T_{\VG}$ and $T_{\RF}$ are the induced theories on the value group and residue field, respectively.
	\end{theorem}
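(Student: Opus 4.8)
The plan is to prove the two inequalities separately; the lower bound is comparatively soft, and the upper bound carries the valuation-theoretic content.

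For $\bdn(T)\ge\bdn(T_{\VG})+\bdn(T_{\RF})$, the first observation is that in a model of $T$ the reduct to the value-group and residue-field sorts is the pure disjoint union of a model of $T_{\VG}$ and a model of $T_{\RF}$: for an $\Lpas$-formula in value-group and residue-field variables only this is immediate from the syntax of $\Lpas$, since $v$, $\ac$ and $\res$ all require a field argument and so no term mixes the two sorts, whence such a formula is a Boolean combination of pure $\Lvg$- and pure $\Lrf$-formulas and the two sorts are orthogonal. Given an inp-pattern of depth $\lambda$ in $T_{\VG}$ and one of depth $\mu$ in $T_{\RF}$, I would then simply juxtapose the two families of rows: each row stays $k_i$-inconsistent, and along any path the chosen value-group formulas and the chosen residue-field formulas are separately consistent and, lying in orthogonal sorts, jointly consistent, so the combined array is an inp-pattern of depth $\lambda+\mu$; taking suprema gives the bound.

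The heart of the argument is the reverse inequality, and I would first isolate a purely combinatorial \emph{orthogonal additivity lemma}: any inp-pattern all of whose formulas are Boolean combinations of $\Lvg$-conditions on value-group variables and $\Lrf$-conditions on residue-field variables has depth at most $\bdn(T_{\VG})+\bdn(T_{\RF})$. To prove it I would extract a mutually indiscernible sub-array (still an inp-pattern), put each row into disjunctive normal form, and — using mutual indiscernibility so that consistency along paths is not lost — collapse row $i$ to a single conjunct $\Theta_i(x_{\RF},-)\wedge\Psi_i(x_{\VG},-)$; a Ramsey argument on the $k_i$-element sets of columns then shows that the inconsistency of row $i$ is witnessed either always in the $\Lrf$-coordinate or always in the $\Lvg$-coordinate, so that, after passing to a sub-row, row $i$ is a pure $T_{\RF}$-row or a pure $T_{\VG}$-row. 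Projecting to the relevant sort, the residue-field rows form an inp-pattern in $T_{\RF}$ and the value-group rows one in $T_{\VG}$, so that their numbers, and hence the total depth, are bounded by $\bdn(T_{\RF})$, $\bdn(T_{\VG})$ and their sum.

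The main obstacle is then to reduce an arbitrary inp-pattern in $T$, in variables $x=(x_{\VF},x_{\VG},x_{\RF})$, to one of the orthogonal form above without losing depth. Relative quantifier elimination rewrites every row formula as a Boolean combination of $\Lvg$-conditions on terms $v(P(x,b))$ and $\Lrf$-conditions on terms $\ac(Q(x,b))$, with $P,Q$ polynomials over the prime ring; the presence of the $\ac$ map is exactly what allows the leading-term datum of such a polynomial to be recorded by an \emph{independent} pair of a residue-field element and a value-group element, which is why the target is an orthogonal union and not merely some extension of $T_{\VG}$ by $T_{\RF}$ — this is the source of the improved precision over Chernikov's bound. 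It remains to eliminate the field variables $x_{\VF}$ one at a time. For a single field variable I would invoke henselianity through a Newton-polygon/Weierstrass factorisation of the finitely many polynomials occurring in the rows — the cell-decomposition phenomenon which in equicharacteristic zero is Pas's theorem and in general must be squeezed out of the relative-QE hypothesis itself — to express the relevant valuations and angular components in terms of finitely many value-group and residue-field parameters, thereby trading the field variable for variables of those two sorts, while checking that $k$-inconsistency of rows and consistency along paths survive. This last step is where positive and mixed residue characteristic genuinely bite, since the valuation of a polynomial along a ball is no longer governed simply by its zeros, and the relative-QE assumption is precisely the hypothesis that restores enough control. Once $x_{\VF}$ is eliminated, the orthogonal additivity lemma applies and yields $\bdn(T)\le\bdn(T_{\VG})+\bdn(T_{\RF})$.
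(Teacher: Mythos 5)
Your proposal has two genuine gaps, one in each inequality. For the lower bound, juxtaposing an inp-pattern in a value-group variable with one in a residue-field variable produces an inp-pattern whose realization variable is a \emph{pair} $(x_{\VG},x_{\RF})$; but $\bdn(T)$ is defined here as the supremum over sorts of the burden of $\{x=x\}$ for a \emph{singleton} $x$, and a depth-$(\lambda+\mu)$ pattern in a pair does not witness $\bdn(T)\ge\lambda+\mu$ (additivity of burden over tuples is not available to quote). The paper instead pulls both patterns back into a single $\VF$-sort singleton by substituting $v(x)$ for the value-group variable and $\ac(x)$ for the residue-field variable: rows stay $k_\alpha$-inconsistent, and path-consistency is exactly Fact~\ref{orthogonal} (existence of $a$ with prescribed $v(a)$ and $\ac(a)$). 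This small step is where the angular component, hence the Denef--Pas language, genuinely enters, and it is the reason equality can fail in reducts such as $\Ldiv$.

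For the upper bound, the heart of the matter is precisely the step you leave as a gesture: passing from the relative-QE normal form, which involves $v(P(x,b))$ and $\ac(Q(x,b))$ for arbitrary polynomials, to data controlled by finitely many $\VG$- and $\RF$-sort parameters. A Newton-polygon/Weierstrass or cell-decomposition argument is exactly what is \emph{not} available outside equicharacteristic zero, and saying that relative QE ``restores enough control'' is the claim to be proved, not a proof. The paper's route is different: it first establishes a Delon-style classification of $1$-types (Theorem~\ref{isolating}, proved in all characteristics via the automorphism Lemma~\ref{automorph}), deduces Proposition~\ref{betterqe} that every formula in one $\VF$-variable only needs \emph{linear} terms $v(x-c^i)$, $\ac(x-c^i)$, and then, keeping the realization variable a $\VF$-singleton throughout (no elimination of ``field variables'' from tuples is needed, since burden is computed on singletons), removes the $\VF$-sort centers $c^i$ row by row using Lemma~\ref{technical}, Fact~\ref{12-easy}, and the pseudo-convergent/fan trichotomy for indiscernible sequences (Propositions~\ref{singleton}--\ref{noshift}). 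Only after that does the orthogonality argument you describe (your ``orthogonal additivity lemma,'' which in the paper is the final splitting $\kappa=G\cup R$ via Fact~\ref{orthogonal}) apply. So your overall skeleton is recognizable, but the two load-bearing ingredients --- the $v/\ac$ lifting for the lower bound and the linear-terms/center-elimination machinery for the upper bound --- are missing.
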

}

Touchard \cite{Tou18} found similar bounds for certain classes of henselian valued fields using the RV-structure, rather than the angular component map, in a recent preprint based on work of Chernikov and Simon \cite{CS19}. Specifically, the burden of a valued field $(K,v)$ is equal to the burden of its RV-sorts, which is in certain cases equal to the maximum of $\bdn(T_{\VG})$ and $\bdn(T_{\RF})$. If any of these burdens are infinite, this bound is identical to the bound found in this paper, although Touchard's bounds are tighter in the case when all of the burdens are finite and the additional assumptions can be made.

Section \ref{prelim} summarizes the necessary definitions and facts about valued fields and burden. Then in Section \ref{relativeqe}, we generalize a result of Delon \cite{Del78} classifying types in henselian valued fields of equicharacteristic 0, which we use to obtain an improved version of relative quantifier elimination. This result has previously been extended by B\'elair \cite{Bel99} to algebraically maximal Kaplansky valued fields of equicharacteristic $p$ and unramified henselian valued fields of mixed characteristic.

Finally, Section~\ref{calculating} uses the improved quantifier elimination result to prove the main result of the paper, Theorem \ref{mainthm}. This section proceeds via a series of propositions that allow us to restrict our focus to increasingly tame inp-patterns.

This paper is based on results from a chapter of the author's thesis \cite{Sin18}, under the supervision of Professor Deirdre Haskell.


\section{Preliminaries}
\label{prelim}


\subsection{Algebra of Valued Fields}
\label{aovf}


We assume that the reader is familiar with the basic notions of valued fields. For more detail, refer to any textbook on valued fields, such as \cite{EP05}. Given a valuation $v$ on a field $K$, we denote the value group by $vK$, the residue field by $Kv$, the valuation ring by $\Ov$, and the maximal ideal by $\m_v$. If $(K,v)$ and $(K',v')$ are valued fields then a valued field isomorphism from $(K,v)$ to $(K',v')$ is a field isomorphism $\phi: K\to K'$ satisfying $v(x) < v(y)$ if and only if $v(\phi(x)) < v(\phi(y))$ for all $x,y\in K$. 

A particularly useful class of valued fields is the class of henselian valued fields. A valuation $v$ on a field $K$ is called henselian if one of the following equivalent conditions holds:
\begin{enumerate}
	\item There is a unique valuation $w$ on the algebraic closure of $K$ such that $v = w|_K$
	\item Every polynomial $p(X) = X^n + aX^{n-1} + \sum_{i=0}^{n-2}a_iX^i \in \Ov[X]$ with $v(a) = 0$ and $v(a_i) > 0$ for all $i$ has a root in $K$.
\end{enumerate}

In this case, we call $(K,v)$ a henselian valued field. It is clear from property (2) above that any valuation on a separably closed field is henselian; many other valued fields are also henselian, including the $p$-adic numbers with the usual valuation. It can be shown that every valued field has a minimal algebraic extension that is henselian, and that this extension is unique up to valued field isomorphism. This extension, denoted $(K^h,v^h)$, is called the henselization of $(K,v)$, and is always an immediate extension of $(K,v)$.

\begin{example}
	(Field of Hahn series) Let $k$ be any field and $\Gamma$ be any group, and consider the set $k[[t^\Gamma]]$ of functions $f: \Gamma\to k$ such that $\supp(f) = \{\gamma\in\Gamma : f(\gamma)\neq 0\}$ is well-ordered. We think of elements of this set as power series and write them as $f = \sum_{\gamma\in\Gamma} a_\gamma t^\gamma$, where $a_\gamma = f(\gamma)$. This set is a field with the usual operations on power series, and the map $v(f) = \min(\supp(f))$ is a henselian valuation on $k[[t^\Gamma]]$.
\end{example}

\begin{defn}
	Let $(a_\rho)_{\rho< I}$ be a sequence of elements of a valued field $(K,v)$ indexed by any ordered set $I$. We say that $(a_\rho)$ is \emph{pseudo-convergent} if
	\[ v(a_{\rho_2}-a_{\rho_1}) < v(a_{\rho_3}-a_{\rho_2}) \]
	for all $\rho_1<\rho_2<\rho_3$. If $(a_\rho)$ is pseudo-convergent then for each $\rho\in I$ there exists $\gamma_\rho\in vK$ such that
	\[ v(a_\rho - a_{\rho'}) = \gamma_\rho \]
	for all $\rho' > \rho$. We say that $x\in K$ is a \emph{pseudo-limit} of $(a_\rho)$ if $v(x-a_\rho) = \gamma_\rho$ for all $\rho$.
\end{defn}

Note that pseudo-limits are not unique. However, field extensions of the form $K(x)$, where $x$ is a pseudo-limit of a pseudo-convergent sequence in $K$, are unique up to valued field isomorphism in certain situations, such as when the sequence is of transcendental type.

\begin{defn}
	Let $I$ be a well-ordered set without a maximum element, and let $(a_\rho)_{\rho\in I}$ be a pseudo-convergent sequence in a valued field $(K,v)$. We say that $(a_\rho)$ is of \emph{transcendental type} if, for all $p(X)\in K[X]$,
	\[ v(p(a_{\rho_1})) = v(p(a_{\rho_2})) \]
	for all sufficiently large $\rho_1,\rho_2\in I$.
\end{defn}

More details can be found in \cite{Kap42}, the paper in which pseudo-convergent sequences were originally introduced.

\begin{defn}
	An \emph{angular component map} is a function $\ac: K\to Kv$ which satisfies the following:
	\begin{enumerate}
		\item $\ac(0) = 0$
		\item For all $x\in \Ov^\times$, $\ac(x) = x+\m$
		\item For all $x,y\in K$, $\ac(xy) = \ac(x)\ac(y)$.
	\end{enumerate}
\end{defn}

On a Hahn field, the map that returns the nonzero coefficient with minimum index of a power series is an angular component map. Not every valued field admits an angular component map, but every valued field has an elementary extension that does \cite[Corollary 1.6]{Pas90}.

The two facts below summarize some fundamental results about the relationship between valuations and angular component maps; they follow easily from the definitions and will be used repeatedly in Section~\ref{calculating}.

\begin{fact} \label{orthogonal}
	Suppose $(K,v)$ is a valued field. For every $\gamma\in vK$ and $r\in Kv^\times$, there exists $a\in K$ with $v(a) = \gamma$ and $\ac(a) = r$.
\end{fact}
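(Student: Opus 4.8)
The plan is to start from \emph{any} element with the prescribed value and then correct its angular component by multiplying by a suitable unit of $\Ov$. Since $\gamma \in vK$ lies in the image of $v$, I would first fix some $b \in K^\times$ with $v(b) = \gamma$. The element $\ac(b)$ is then automatically a unit of $Kv$: applying axioms (2) and (3) to $1 = b\cdot b^{-1} \in \Ov^\times$ gives $1 + \m = \ac(1) = \ac(b)\ac(b^{-1})$, so $\ac(b)\in Kv^\times$. Hence the ``defect'' $s := r\,\ac(b)^{-1}$ is a well-defined element of $Kv^\times$.

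Next I would lift $s$ back to the valuation ring. The residue map $\Ov \to Kv$ restricts to a surjection $\Ov^\times \to Kv^\times$ — any representative $u$ of a nonzero residue class satisfies $v(u) = 0$, hence $u \in \Ov^\times$ — so I can choose $u \in \Ov^\times$ with $u + \m = s$. By axiom (2) this means $\ac(u) = s$. Now set $a := ub$. Then $v(a) = v(u) + v(b) = 0 + \gamma = \gamma$, and by axiom (3), $\ac(a) = \ac(u)\ac(b) = s\,\ac(b) = r\,\ac(b)^{-1}\ac(b) = r$, which is exactly what is wanted.

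There is no real obstacle here: the argument is a two-line manipulation of the angular component axioms together with the standard surjectivity of $\Ov^\times \to Kv^\times$. The only points meriting a moment's care are checking that $\ac$ is nonzero (equivalently invertible in $Kv$) on all of $K^\times$ and that the statement tacitly presupposes $(K,v)$ is equipped with an angular component map in the first place; both are immediate from the setup.
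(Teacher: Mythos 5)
Your proof is correct. The paper states Fact \ref{orthogonal} without proof, as a standard property of valued fields equipped with an angular component map, so there is no argument in the text to compare against; your verification --- fix $b$ with $v(b)=\gamma$, observe $\ac(b)\in Kv^\times$ by multiplicativity applied to $b\cdot b^{-1}=1$, lift $r\,\ac(b)^{-1}$ along the surjection $\Ov^\times\to Kv^\times$ to a unit $u$ with $\ac(u)=r\,\ac(b)^{-1}$, and take $a=ub$ --- is exactly the standard one, and each step is justified by the axioms of $\ac$. Your closing caveat is also consistent with the paper's conventions, since every $\Lpas$-structure there is assumed to carry an angular component map.
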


\begin{fact} \label{12-easy}
	Suppose $(K,v)$ is a valued field and $a,b,c\in K^\times$.
	\begin{enumerate}
		\item If $v(a-b) < v(c-b)$ then $v(a-b) = v(a-c)$ and $\ac(a-b) = \ac(a-c)$.
		\item If $v(a-b) = v(a-c)$ then $\ac(a-b)\neq \ac(a-c)$ if and only if $v(a-b) = v(c-b)$.
		\item If $v(a-b) = v(a-c) = v(c-b)$ then $\ac(a-c) = \ac(a-b) - \ac(c-b)$.
	\end{enumerate}
\end{fact}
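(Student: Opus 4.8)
\emph{Proof proposal.} The plan is to reduce all three parts to two elementary principles: the ultrametric inequality for $v$ (together with its strict-domination refinement), and the fact that on $\Ov^\times$ the map $\ac$ coincides with the residue map $\res\colon\Ov^\times\to Kv^\times$, which is a ring homomorphism, combined with the multiplicativity of $\ac$ on all of $K^\times$. Throughout, the recurring move is to factor out an element of the relevant valuation so that the differences in question become units of $\Ov$.

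For (1), write $a-c=(a-b)+(b-c)$. Since $v(b-c)=v(c-b)>v(a-b)$, strict domination in the ultrametric inequality gives $v(a-c)=v(a-b)$. Factoring, $a-c=(a-b)\bigl(1+(b-c)/(a-b)\bigr)$, and since $v\bigl((b-c)/(a-b)\bigr)>0$ the second factor lies in $1+\m_v\subseteq\Ov^\times$ and has residue, hence $\ac$-value, equal to $1$; multiplicativity of $\ac$ then yields $\ac(a-c)=\ac(a-b)$.

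For (2) and (3) I would set $\gamma:=v(a-b)=v(a-c)$ and divide through by an element of valuation $\gamma$ — concretely $t:=a-b$ — so that $(a-b)/t$, $(a-c)/t$, and (in (3)) $(c-b)/t$ become units of $\Ov$ whose residues are their $\ac$-values divided by $\ac(t)$, using $\res(u)=\ac(u)$ for $u\in\Ov^\times$ and $\ac(x/y)=\ac(x)/\ac(y)$. In (3) the identity $(a-c)/t=(a-b)/t-(c-b)/t$ holds in $\Ov$; applying $\res$ and multiplying back through by $\ac(t)$ gives $\ac(a-c)=\ac(a-b)-\ac(c-b)$. For (2), the factorization $c-b=(a-b)\bigl(1-(a-c)/(a-b)\bigr)$ shows that $v(c-b)=\gamma$ if and only if the unit $1-(a-c)/(a-b)$ has nonzero residue, i.e.\ $\res\bigl((a-c)/(a-b)\bigr)=\ac(a-c)/\ac(a-b)\neq 1$, i.e.\ $\ac(a-b)\neq\ac(a-c)$.

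The only point requiring genuine care is the bookkeeping around the dictionary $\res(u)=\ac(u)$ for $u\in\Ov^\times$ together with the multiplicativity of $\ac$, which lets one rewrite residues of ratios of differences in terms of angular components; once this is set up, each of the three parts collapses to a one-line computation, so there is no substantive obstacle.
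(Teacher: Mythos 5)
Your proof is correct; the paper states Fact \ref{12-easy} without proof as standard background on angular component maps, and your argument --- strict domination in the ultrametric inequality plus the dictionary $\res(u)=\ac(u)$ for $u\in\Ov^\times$ and multiplicativity of $\ac$, after dividing through to reduce to units --- is exactly the standard verification one would supply. The only cosmetic slip is in (2), where $1-(a-c)/(a-b)$ should be described as an element of $\Ov$ rather than as a unit, since whether it is a unit (equivalently, has nonzero residue) is precisely the condition being characterized; your computation already treats it correctly in this way.
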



\subsection{Model Theory of Valued Fields}
\label{mtovf}


Valued fields can be viewed as first order structures in a number of ways; in this paper, we exclusively view a valued field as a three-sorted structure with sorts for $K$, $vK$, and $Kv$, and various maps between them.

\begin{defn}
	The \emph{Denef-Pas Language} for valued fields is the three-sorted language $\Lpas$ with the following sorts and functions:
	\begin{enumerate}
		\item The valued field sort $\VF$ has the language of rings $\Lring = \{0,1,+,-,\cdot\}$
		\item The value group sort $\VG$ has an expansion of the language of ordered abelian groups $\Lvg =     \{0,+,-,<,\infty,\ldots\}$
		\item The residue field sort $\RF$ has an expansion $\Lrf$ of the language of rings
		\item The only maps between sorts are $v: \VF\to \VG$ and $\ac: \VF\to\RF$.
	\end{enumerate}
\end{defn}

Calling this ``the'' Denef-Pas Language is slightly misleading, since the value group and residue field languages are some expansion of the appropriate minimum languages. When we consider a valued field $(K,v)$ as an $\Lpas$-structure, we always assume that the $\VF$-sort is $K$, the $\VG$-sort is $vK$, the $\RF$-sort is $Kv$, $v$ is the valuation map, and $\ac$ is an angular component map.

We say that a theory $T$ in $\Lpas$ admits \emph{relative quantifier elimination} if it eliminates quantifiers $\forall x$ and $\exists x$, where $x$ is a variable in the valued field sort. It follows syntactically from relative quantifier elimination that every formula $\phi(x^\VF, x^\VG, x^\RF)$ in $T$ is equivalent to one of the form
\[ \bigvee_{i=1}^n \chi_i(v(f_1(x^\VF),\ldots, v(f_m(x^\VF)), x^\VG) \wedge \rho_i(\ac(f_1(x^\VF)),\ldots,\ac(f_m(x^\VF)), x^\RF) \]
where $x^\VF,x^\VG,x^\RF$ are tuples of variables in the sorts $\VF,\VG,\RF$, respectively, $\chi_i$ are $\Lvg$-formulas, $\rho_i$ are $\Lrf$ formulas, and $f_j$ are polynomials with integer coefficients. Note that there is no $\Lring$-formula corresponding to the $\VF$-sort; this is because any such formula would be a boolean combination of statements of the form $g(x^\VF) = 0$, which is equivalent to $v(g(x^\VF)) = \infty$, and so this part of the formula can be absorbed into the $\Lvg$ portion.

Suppose $T$ is an $\Lpas$-theory with relative quantifier elimination, and consider the special case of a formula $\phi(x)$ with parameters in some model $(K,v)$ such that $x$ is a singleton in the $\VF$-sort. In this case, $\phi(x)$ is equivalent to a formula of the form
\[ \bigvee_{i=1}^n \chi_i(v(f_1(x),\ldots, v(f_m(x))) \wedge \rho_i(\ac(f_1(x)),\ldots,\ac(f_m(x))) \]
where $\chi_i$ are $\Lvg$-formulas with parameters in $vK$, $\rho_i$ are $\Lrf$ formulas with parameters in $Kv$, and $f_j$ are polynomials with coefficients in $K$. This follows immediately from the general form of relative quantifier elimination by substituting a parameter for every variable except a singleton in the $\VF$-sort.

Many theories of henselian valued fields have relative quantifier elimination, including all theories of henselian valued fields of characteristic $(0,0)$ \cite{Pas89}, algebraically maximal Kaplansky fields of characteristic $(p,p)$ \cite{Bel99}, and strongly dependent henselian valued fields in any characteristic \cite{HH19}. In the case where $T$ is a theory of henselian valued fields of characteristic $(0,0)$, we may assume that the polynomials $f_i$ are all linear by the cell decomposition of \cite{Pas89}. In fact, we prove in Section \ref{relativeqe} that this is true in any characteristic.

The celebrated Ax-Kochen/Ershov (AKE) principle can be viewed as an immediate consequence of relative quantifier elimination.

\begin{fact} \label{AKE}
	Suppose $(K,v)$ and $(L,w)$ are both models of some theory $T$ of henselian valued fields in $\Lpas$ that admits relative quantifier elimination. Then $(K,v) \equiv (L,w)$ if and only if $vK \equiv wL$ (as $\Lvg$-structures) and $Kv \equiv Lw$ (as $\Lrf$-structures).
\end{fact}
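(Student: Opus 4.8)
The plan is to derive Fact~\ref{AKE} directly from relative quantifier elimination. The forward direction is immediate: if $(K,v)\equiv(L,w)$ as $\Lpas$-structures, then in particular every $\Lvg$-sentence holds in $(K,v)$ iff it holds in $(L,w)$; since $\Lvg$-sentences are interpreted in the $\VG$-sort, this says exactly $vK\equiv wL$, and symmetrically for $Kv\equiv Lw$. So all the content is in the reverse direction, and the strategy is to show that relative quantifier elimination forces every $\Lpas$-sentence to be equivalent (modulo $T$) to a sentence whose truth depends only on the $\VG$- and $\RF$-sorts.

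First I would take an arbitrary $\Lpas$-sentence $\sigma$. Applying relative quantifier elimination, $T\vdash\sigma\leftrightarrow\sigma'$ where $\sigma'$ is a sentence quantifying only over the $\VG$- and $\RF$-sorts, built from atomic formulas of the form $\chi_i(v(f_1),\ldots,v(f_m),x^\VG)$ and $\rho_i(\ac(f_1),\ldots,\ac(f_m),x^\RF)$. Since $\sigma'$ is a sentence, the $f_j$ are polynomials with integer coefficients in no free $\VF$-variables, hence constants; their values $v(f_j)$ and $\ac(f_j)$ in any model are determined by the ring structure of the prime field, so in a model of $T$ they take fixed values (an element of $\VG\cup\{\infty\}$ and of $\RF$) that are the same in every model of $T$ (or at least are expressible by $\Lvg$- and $\Lrf$-formulas over the prime field). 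After substituting these, $\sigma'$ becomes a boolean combination of an $\Lvg$-sentence (quantifying over $x^\VG$) and an $\Lrf$-sentence (quantifying over $x^\RF$) — call them the value-group part and the residue-field part. The key point is that there is no remaining interaction between the two sorts, because the only cross-sort symbols $v$ and $\ac$ have been evaluated at constants.

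Now suppose $vK\equiv wL$ and $Kv\equiv Lw$. Then the value-group part of $\sigma'$ has the same truth value in $(K,v)$ as in $(L,w)$ (it is an $\Lvg$-sentence), and likewise the residue-field part has the same truth value in both; hence the boolean combination $\sigma'$ does too, and therefore so does $\sigma$. Since $\sigma$ was arbitrary, $(K,v)\equiv(L,w)$.

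The main obstacle is the bookkeeping in the middle step: one must check carefully that when $\sigma$ is a sentence, relative quantifier elimination really does eliminate \emph{all} $\VF$-quantifiers and leaves no free $\VF$-variables, so that the polynomials $f_j$ are genuinely constants and the resulting formula decomposes cleanly as a boolean combination over the two remaining sorts with no surviving $\VF$-sort atoms. Once that decomposition is in hand, the argument is a routine appeal to the definition of elementary equivalence on each sort. (The observation in the excerpt that an $\Lring$-atom $g(x^\VF)=0$ is absorbed as $v(g(x^\VF))=\infty$ is what guarantees no $\VF$-sort atomic formulas linger.)
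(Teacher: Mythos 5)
Your overall route is the one the paper intends: Fact~\ref{AKE} is stated there without proof, as an ``immediate consequence of relative quantifier elimination,'' and your reduction of an arbitrary $\Lpas$-sentence to a disjunction of conjunctions $\chi_i(v(f_1),\ldots,v(f_m))\wedge\rho_i(\ac(f_1),\ldots,\ac(f_m))$ in which the $f_j$ are integer constants is exactly that reduction. The forward direction and the final step (a boolean combination of pure $\Lvg$- and $\Lrf$-sentences has the same truth value in two models with $vK\equiv wL$ and $Kv\equiv Lw$) are fine.

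The gap is the middle claim that the values $v(f_j)$ and $\ac(f_j)$ ``take fixed values that are the same in every model of $T$,'' or at least are definable by pure $\Lvg$-/$\Lrf$-formulas. These are elements of $vK$ and of $Kv$, not canonical objects shared across models, and ``determined by the ring structure of the prime field'' does not pin down their $\Lvg$-/$\Lrf$-types. In equicharacteristic your argument does close: $v(n)\in\{0,\infty\}$, with the alternative decided by the residue characteristic (hence by $\Th(Kv)$), and when $v(n)=0$ one has $\ac(n)=n\cdot 1$, the value of a closed $\Lrf$-term, so after substitution you genuinely obtain a boolean combination of $\Lvg$- and $\Lrf$-sentences. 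But in residue characteristic $p$ with $K$ of characteristic $0$, $v(p)$ is a positive element of the value group that depends on ramification, and $\ac(p)$ is a unit residue that depends on the choice of angular component map; neither need be $\emptyset$-definable in its pure sort, and sentences such as $\exists\gamma\,(\gamma+\gamma=v(p))$ or $\ac(p)=1$ are already of the special form allowed by relative quantifier elimination, so the elimination step alone does not make them harmless. To finish in mixed characteristic you must additionally argue that any two models of such a $T$ whose value groups and residue fields are elementarily equivalent assign the same $\Lvg$-type to the tuple $(v(n))_{n\in\mathbb Z}$ and the same $\Lrf$-type to $(\ac(n))_{n\in\mathbb Z}$ (as happens, for instance, in the unramified case, where $v(p)$ is the least positive element); this is precisely the fine print of the mixed-characteristic AKE principle, and it is not supplied by your appeal to the prime field.
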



\subsection{Inp-patterns and Burden}
\label{inppatterns}


Burden is a notion of complexity of a partial type developed originally by Adler \cite{Adl07} as a generalization of the notion of weight from simple theories. Conveniently, it also generalizes the notion of dp-rank from NIP theories.

\begin{defn}
	Let $\pi(x)$ be a partial type and $\kappa$ a cardinal. An \emph{inp (independent partition) pattern} in $\pi(x)$ of depth $\kappa$ consists of tuples $\{b_{\alpha,i} : \alpha<\kappa, i<\omega\}$, formulas $\{\phi_\alpha(x,y_\alpha): \alpha<\kappa\}$ with $|y_\alpha| = |b_{\alpha,i}|$, and $k_\alpha < \omega$ such that
	\begin{enumerate}
		\item $\{\phi_\alpha(x,b_{\alpha,i})\}_{i<\omega}$ is $k_\alpha$-inconsistent for each $\alpha<\kappa$
		\item $\pi(x)\cup \{\phi_\alpha(x,b_{\alpha, \eta(\alpha)})\}_{\alpha<\kappa}$ is consistent for any $\eta : \kappa\to\omega$.
	\end{enumerate}
	The \emph{burden} of $\pi(x)$, written $\bdn(\pi)$, is the supremum of the depths of all inp-patterns in $\pi(x)$, or $\infty$ if the supremum does not exist.
\end{defn}

In order to simplify the notation, we often write $(\phi_\alpha(x,y_\alpha), b_\alpha, k_\alpha)_{\alpha<\kappa}$ for the above inp-pattern. In this notation, $b_\alpha$ represents the sequence $(b_{\alpha,i})_{i<\omega}$. By strengthening the assumptions on inp-patterns slightly, we can make it easier to check whether a given array is an inp-pattern.

\begin{defn}
	Let $\pi(x)$ be a partial type and $\kappa$ a cardinal. An \emph{indiscernible inp-pattern} in $\pi(x)$ of depth $\kappa$ consists of tuples $\{b_{\alpha,i} : \alpha<\kappa, i<\omega\}$ and formulas $\{\phi_\alpha(x,y_\alpha): \alpha<\kappa\}$ with $|y_\alpha| = |b_{\alpha,i}|$ such that
	\begin{enumerate}
		\item The sequences $(b_{\alpha,i})_{i<\omega}$ are mutually indiscernible; that is, for each $\alpha<\kappa$, the sequence $(b_{\alpha,i})_{i<\omega}$ is indiscernible over $\{b_{\beta,i} : \beta\neq\alpha, i<\omega\}$
		\item $\{\phi_\alpha(x,b_{\alpha,i})\}_{i<\omega}$ is inconsistent for each $\alpha<\kappa$
		\item $\pi(x)\cup \{\phi_\alpha(x,b_{\alpha, 0})\}_{\alpha<\kappa}$ is consistent.
	\end{enumerate}
\end{defn}

As with inp-patterns, we often condense the notation for the above indiscernible inp-pattern to $(\phi_\alpha(x,y_\alpha), b_\alpha)_{\alpha<\kappa}$.

It follows immediately from the definition of indiscernibility that every indiscernible inp-pattern is an inp-pattern. By a common argument using Ramsey theory and compactness, any inp-pattern can be used to generate an indiscernible inp-pattern of the same depth; see Lemma 5.1.3 of \cite{TZ12} for a more detailed explanation. Thus, the burden of a partial type $\pi(x)$ is equal to the supremum of the depths of all indiscernible inp-patterns in $\pi(x)$.

The burden of $\pi(x)$ measures the complexity of $\pi$: the greater the depth $\kappa$ of an inp-pattern, the closer $\pi$ is to satisfying TP\textsubscript{2}, the tree property of the second kind. In fact, $\bdn(\pi) = \infty$ if and only if $\pi(x)$ has TP\textsubscript{2}.

We can measure the complexity of a theory by looking at the partial type $\pi(x)$ that contains only the formula $x=x$, where $x$ is a singleton; when we write $\bdn(T)$, we mean $\bdn(\pi)$ for this choice of $\pi(x)$. If $T$ is a theory in a multi-sorted language (say $\Lpas$), then there is a separate formula $x=x$ for each sort and we take $\bdn(T)$ to be the supremum over the sorts. In the case of $\Lpas$, we can restrict our focus to the valued field sort $\VF$ because there are definable surjections from $\VF$ to the other sorts; this is described in more detail in Section \ref{calculating}.

Note that if $\bdn(T) = \aleph_0$ then either $\pi(x) = \{x=x\}$ has an inp-pattern of depth $\aleph_0$ or $T$ has inp-patterns of all finite (but no infinite) depths; in the latter case, we say that $T$ is \emph{strong}. This odd situation actually occurs whenever $\bdn(T)$ is an infinite cardinal; one way to handle it is with the following definition, modified from \cite{Adl07}.

\begin{defn}
	Let Card* be the class containing the cardinals and, for each limit cardinal $\kappa$, a new symbol $\kappa_-$. The ordering on cardinals is extended to Card* by setting $\kappa_-$ to be a predecessor to $\kappa$; that is, for all $\lambda\in\text{Card*}$, $\lambda < \kappa$ if and only if $\lambda \leq \kappa_-$.
\end{defn}

We can then modify the definition of burden so that $\bdn(T)$ is the supremum in Card*, or $\infty$ if the supremum does not exist; that way, a theory $T$ has $\bdn(T) = \aleph_{0-}$ if it is strong, and $\bdn(T) = \aleph_0$ if $\pi(x) = \{x=x\}$ has an inp-pattern of depth $\aleph_0$. The arguments given in this paper work whether burden is defined to be a cardinal or an element of Card*, provided the sum of two infinite elements in Card* is defined to be the maximum of those elements, just as it is for cardinals.


\section{Relative Quantifier Elimination}
\label{relativeqe}


In Section \ref{calculating}, we generalize and improve a result of \cite{Cher14} relating the burden of certain valued fields to the burdens of their value groups and residue fields. In order to obtain the generalization, we need a stronger version of relative quantifier elimination than the one given in Section \ref{mtovf}.

We begin with a classification of 1-types over any model in $\Lpas$, due to \cite{Del78}. Consider an elementary extension $\K\prec\M$ of valued fields in $\Lpas$, fix $x\in M\smallsetminus K$, and define
\[ I_K(x) = \{\gamma\in vK : \gamma = v(x-k) \text{ for some }k\in K\} .\]
Then $\tp(x/K)$ belongs to one of three families, depending on the structure of $I_K(x)$.
\begin{enumerate}
	\item $I_K(x) = \{v(x-k) : k\in K\}$ and does not have a maximum element. In this case, we say that $\tp(x/K)$ is \emph{immediate}.
	\item $I_K(x) = \{v(x-k) : k\in K\}$ and has a maximum element. In this case, we say that $\tp(x/K)$ is \emph{residual}.
	\item $I_K(x) \neq \{v(x-k) : k\in K\}$. In this case, we say that $\tp(x/K)$ is \emph{valuational}.
\end{enumerate}
In the first two cases, $\{v(x-k) : k\in K\}$ is a subset of $vK$. In the third, there is a single element $\gamma_0\in \{v(x-k) : k\in K\}\smallsetminus vK$; if there were two, say $\gamma_0 = v(x-k_0) < v(x-k_1) = \gamma_1$, then $v(x-k_0) = v((x-k_0)-(x-k_1)) = v(k_1-k_0) \in vK$, contradicting $\gamma_0\notin vK$. By a similar argument, $\gamma_0$ is an upper bound for $I_K(x)$.


The stronger form of relative quantifier elimination we need is a consequence of the following theorem.

\begin{theorem} \label{isolating}
	Suppose $\K$ is a henselian valued field in $\Lpas$ such that $\Th(\K)$ admits relative quantifier elimination. Let $\M$ be a monster model of $\Th(\K)$ and let $x\in M\smallsetminus K$ be an element of the valued field sort.
	\begin{enumerate}
		\item If $\tp(x/K)$ is immediate, let $(a_\rho,\gamma_\rho)_{\rho<\kappa}$ be a sequence such that $a_\rho\in K$, $\gamma_\rho = v(x-a_\rho)$, and $(\gamma_\rho)$ is strictly increasing and cofinal in $I_K(x)$. Then $\tp(x/K)$ is completely determined by the set of formulas $\{v(x-a_\rho)=\gamma_\rho : \rho<\kappa\}$.
		\item If $\tp(x/K)$ is residual, then it is completely determined by a pair of constants $a\in K$ and $\gamma\in vK$ such that $v(x-a) = \gamma$ and $\ac(x-a)\notin Kv$, by the formula $v(x-a) = \gamma$, and by the type $\tp(\ac(x-a)/Kv)$.
		\item If $\tp(x/K)$ is valuational, then it is completely determined by some constant $a\in K$ such that $v(x-a)\notin vK$, by the type $\tp(v(x-a)/vK)$, and by the type $\tp(\ac(x-a)/Kv)$.
	\end{enumerate}
\end{theorem}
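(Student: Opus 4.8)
The plan is to reduce everything to relative quantifier elimination. By the singleton-variable form of relative QE recorded in Section~\ref{mtovf}, $\tp(x/K)$ is determined by the family of types $\tp\big((v(f_1(x)),\dots,v(f_m(x)))/vK\big)$ and $\tp\big((\ac(f_1(x)),\dots,\ac(f_m(x)))/Kv\big)$, as $(f_1,\dots,f_m)$ ranges over finite tuples in $K[X]$. So in each of the three cases it suffices to show that any two realizations of the listed data agree on all of these joint types; concretely, that the values $v(f(x))$ and $\ac(f(x))$ are computed from the data, either as explicit elements of $vK$ and $Kv$ or with their type over $vK$, $Kv$ pinned down by $\tp(v(x-a)/vK)$ or $\tp(\ac(x-a)/Kv)$. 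The real content is passing from arbitrary polynomials $f$ to the linear data $x-a$ (respectively the sequence $v(x-a_\rho)=\gamma_\rho$).

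For the valuational case, put $t=x-a$, $\delta=v(t)\notin vK$, and Taylor-expand $f(a+t)=\sum_i c_i t^i$ with $c_i=f^{(i)}(a)/i!\in K$. Since $\delta\notin vK$, the values $v(c_i)+i\delta$ (over $i$ with $c_i\ne 0$) are pairwise distinct, so the minimum is attained uniquely, say at $i_0$, and which index this is depends only on $\tp(\delta/vK)$; hence $v(f(x))=v(c_{i_0})+i_0\delta$ is an affine function of $\delta$ over $vK$ and $\ac(f(x))=\ac(c_{i_0})\ac(t)^{i_0}$ is a monomial in $\ac(t)$ over $Kv$, so the two joint types are determined by $\tp(v(x-a)/vK)$ and $\tp(\ac(x-a)/Kv)$. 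For the residual case, $\gamma:=v(x-a)\in vK$ is the maximum of $I_K(x)$; one first checks $\ac(x-a)\notin Kv$ (otherwise, choosing $c\in K$ with $v(c)=\gamma$ and $\ac(c)=\ac(x-a)$ via Fact~\ref{orthogonal}, Fact~\ref{12-easy} gives $v(x-a-c)>\gamma$, contradicting maximality), then writes $x-a=\pi u$ with $v(\pi)=\gamma$, so $u\in\Ov^\times$ and the residue of $u$ is $\ac(u)=\ac(x-a)\notin Kv$; after clearing the common power of $\pi$, computing $v(f(x))$ and $\ac(f(x))$ reduces to evaluating $v$ and the residue of $g(u)$ for $g\in K[X]$, which is governed by the polynomial identities over $Kv$ satisfied by the residue of $u$ (immediate when that residue is transcendental over $Kv$, a short Newton-polygon/Hensel computation when it is algebraic), hence depends only on $\tp(\ac(x-a)/Kv)$. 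Relative QE then closes both cases.

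For the immediate case, let $(a_\rho,\gamma_\rho)_{\rho<\kappa}$ be as stated, so $(a_\rho)$ is pseudo-convergent with pseudo-limit $x$ and $(\gamma_\rho)$ cofinal in $I_K(x)$. The crucial step is that $(a_\rho)$ may be taken of \emph{transcendental} type over $K$: otherwise, being of algebraic type, it would (since $\K$ is henselian and — under the relative-QE hypothesis — algebraically maximal) have a pseudo-limit in $K$, which would violate cofinality of $(\gamma_\rho)$ in $I_K(x)$. Granting transcendental type, Kaplansky's lemma shows that for every $f\in K[X]$ the sequences $v(f(a_\rho))$ and $\ac(f(a_\rho))$ are eventually constant, with eventual values $v(f(x))$ and $\ac(f(x))$; in particular these lie in $vK$ and $Kv$ and are computed from $(a_\rho,\gamma_\rho)$ together with the formulas $v(x-a_\rho)=\gamma_\rho$ (which is exactly what singles out $x$ as the right pseudo-limit). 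Their types over $vK$, $Kv$ are then trivially determined, and relative QE finishes.

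I expect the immediate case to be the main obstacle. Delon's original argument lives in characteristic $(0,0)$, where henselian valued fields are automatically algebraically maximal and every pseudo-convergent sequence cofinal in $I_K(x)$ is of transcendental type; to run the argument in arbitrary characteristic one must establish that the henselian fields admitting relative quantifier elimination under consideration are (separably) algebraically maximal, and then carry Kaplansky's stabilization argument for both $v$ and $\ac$ through in that generality. A secondary, purely technical, point will be the valuation computation in the residual case when the residue of $u$ is algebraic over $Kv$.
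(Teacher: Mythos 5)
Your plan is, in substance, the paper's argument in different packaging. The reduction of $\tp(x/K)$ via relative quantifier elimination to the $\VG$-types of tuples $v(f(x))$ and the $\RF$-types of tuples $\ac(f(x))$ (using that, after eliminating $\VF$-quantifiers, the value group and residue field sorts are orthogonal, so the joint type splits) is exactly what the paper exploits, and your term-by-term computations in the valuational and residual cases are precisely the content of the paper's Lemma \ref{automorph}: a unique term of minimal value because $n\,v(x-a)\notin vK$ for every nonzero $n$, and $\ac(f(x))$ a polynomial over $Kv$ in the transcendental element $\ac(x-a)$. The paper then packages this as an $\Lpas$-automorphism (via Kaplansky's Theorem 2 in the immediate case, and Lemma \ref{automorph} in the other two), whereas you determine the types directly; the mathematics is the same. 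Two of your hedges are unnecessary: since $vK\prec vM$ and ordered abelian groups are torsion-free, $v(x-a)\notin vK$ already gives $n\,v(x-a)\notin vK$ for all nonzero $n$; and since $Kv\prec Mv$, the residue field $Kv$ is relatively algebraically closed in $Mv$, so $\ac(x-a)\notin Kv$ forces it to be transcendental over $Kv$ --- your ``algebraic residue'' subcase in the residual analysis is vacuous, as the paper notes. (Also, in characteristic $p$ take $c_i$ to be the coefficients of $f(a+T)$ rather than $f^{(i)}(a)/i!$.)

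The one genuine gap is exactly where you locate the main obstacle: the immediate case. Your argument needs the pseudo-convergent sequence $(a_\rho)$ to be of \emph{transcendental type} over $K$, and you propose to derive this from the claim that henselian valued fields whose theory admits relative quantifier elimination in $\Lpas$ are (separably) algebraically maximal; but you neither prove nor cite this, and it is not a formal consequence of the definitions, so as written case (1) is incomplete. If the sequence could be of algebraic type with no pseudo-limit in $K$, the values $v(q(x))$ for the minimal witnessing polynomial $q$ would not be pinned down by the linear formulas, and the stabilization step would fail --- so this point cannot simply be waved through. For comparison, the paper does not establish algebraic maximality either: it asserts that, since $\K\prec\M$, the pseudo-limits $x,x'$ are of transcendental type, then applies Kaplansky's Theorem 2 to obtain a valued field isomorphism $K(x)\to K(x')$ over $K$, observes that the induced maps on $vK$ and $Kv$ are identities because the extension is immediate (using Fact \ref{orthogonal}), and concludes by relative quantifier elimination. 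Your Kaplansky stabilization of $v(f(a_\rho))$ and $\ac(f(a_\rho))$ is an equivalent way to finish once transcendental type is secured; to complete your write-up you must either supply the algebraic-maximality argument you defer to, or justify transcendental type the way the paper does, from elementarity of $\K\prec\M$.
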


In equicharacteristic 0, this theorem was originally proved by Delon \cite{Del78}; a more detailed proof using angular component maps can be found in \cite{BB96}. B\'elair later extended the result to certain fields of characteristic $(p,p)$ and $(0,p)$ \cite{Bel99}. Before we prove the result for any characteristic, we state the following technical lemma.

\begin{lemma} \label{automorph}
	Suppose $\K$ is a henselian valued field in $\Lpas$ such that $\Th(\K)$ admits relative quantifier elimination. Let $\M$ be a monster model of $\Th(\K)$ and suppose there are $y,y'\in M$ such that the following exist:
	\begin{enumerate}
		\item A valued field isomorphism $\phi: K(y)\to K(y')$ with $\phi|_K = \id_K$ and $\phi(y) = y'$
		\item An $\Lvg$-automorphism $\alpha: vM\to vM$ with $\alpha|_{vK} = \id_{vK}$ and $\alpha(v(y)) = v(y')$
		\item An $\Lrf$-automorphism $\beta: Mv\to Mv$ with $\beta|_{Kv} = \id_{Kv}$ and $\beta(\ac(y)) = \ac(y')$
	\end{enumerate}
	Assume moreover that the value group $v(K(y))$ is generated by $vK\cup\{v(y)\}$, that $v(K(y'))$ is generated by $vK\cup\{v(y')\}$, and that either
	\begin{enumerate}
		\item $\ac(y)$ and $\ac(y')$ are both transcendental over $Kv$, or
		\item $v(y^n)\notin vK$ and $v((y')^n)\notin vK$ for any nonzero $n\in\mathbb Z$.
	\end{enumerate}
	Then there exists an $\Lpas$-automorphism $\sigma$ of $\M$ with $\sigma|_{K(y)} = \phi$; in particular, this means $\tp(y/K) = \tp(y'/K)$.
\end{lemma}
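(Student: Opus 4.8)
The plan is to prove the apparently stronger conclusion $\tp(y/K) = \tp(y'/K)$; the automorphism then comes for free. Once the types agree, homogeneity of the monster model $\M$ produces an automorphism $\sigma$ of $\M$ fixing $K$ pointwise with $\sigma(y) = y'$, and $\sigma|_{K(y)}$ is then a valued field isomorphism $K(y)\to K(y')$ fixing $K$ and sending $y$ to $y'$; since such a map is uniquely determined on $K(y)$, it must coincide with $\varphi$. So the whole content is the type equality, and by relative quantifier elimination in the form for formulas in one $\VF$-variable over $K$ recorded in Section~\ref{mtovf}, it is enough to prove: for every finite tuple $f_1,\dots,f_m\in K[X]$ (we may assume each $f_i\neq 0$), the map $\alpha$ sends $(v(f_1(y)),\dots,v(f_m(y)))$ to $(v(f_1(y')),\dots,v(f_m(y')))$ and $\beta$ sends $(\ac(f_1(y)),\dots,\ac(f_m(y)))$ to $(\ac(f_1(y')),\dots,\ac(f_m(y')))$. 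Granting this, since $\alpha$ is an $\Lvg$-automorphism of $vM$ fixing $vK$ pointwise and $\beta$ an $\Lrf$-automorphism of $Mv$ fixing $Kv$ pointwise, both are elementary over the relevant bases, so any $\Lvg$-formula over $vK$ true of the $y$-tuple is true of the $y'$-tuple, and likewise on the residue side; relative quantifier elimination then yields $\tp(y/K) = \tp(y'/K)$.

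Everything thus reduces to computing $v$ and $\ac$ on polynomials in $y$. Write $f = \sum_j c_j X^j$ with $c_j\in K$, put $\mu = \min\{v(c_j)+j\,v(y) : c_j\neq 0\}$ and $J = \{j : c_j\neq 0,\ v(c_j)+j\,v(y) = \mu\}$, and set $g(Z) = \sum_{j\in J}\ac(c_j)Z^j\in Kv[Z]$, a nonzero polynomial. I claim $v(f(y)) = \mu$ (no cancellation at the leading level) and $\ac(f(y)) = g(\ac(y))$. To see this, divide $f(y)$ by the monomial $c_{j_1}y^{j_1}$ for a fixed $j_1\in J$: the quotient is a unit plus terms of positive value, its residue being $\ac(c_{j_1})^{-1}\ac(y)^{-j_1}\,g(\ac(y))$, and this is nonzero in each of the two cases of the hypothesis. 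If $\ac(y)$ is transcendental over $Kv$, the nonzero polynomial $g$ cannot vanish at it; if $v(y^n)\notin vK$ for every nonzero $n$, then distinct monomials of $f$ have distinct valuations, so $J$ is a singleton and $g$ is a nonvanishing monomial. Either way $f(y)/(c_{j_1}y^{j_1})$ is a unit, so $v(f(y)) = \mu$, and multiplicativity of $\ac$ with $\ac|_{\Ov^\times} = \res$ then gives $\ac(f(y)) = g(\ac(y))$. The identical computation applies to $y'$ (using the companion hypotheses on $\ac(y')$ and $v((y')^n)$).

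It remains to transfer this data from $y$ to $y'$. On the value-group side, $\varphi$ induces a group isomorphism $vK(y)\to vK(y')$ commuting with $v$; since $vK(y)$ is generated as a group by $vK$ and $v(y)$ (the assumed hypothesis, which is also a consequence of the no-cancellation computation), and both this induced map and $\alpha$ fix $vK$ and send $v(y)$ to $v(y')$, they agree on $vK(y)$, so $v(f(y')) = \alpha(v(f(y)))$ for every $f$. On the residue side, for each candidate subset $S$ the condition ``$J = S$'', together with the value of $\mu$, is expressed by an $\Lvg$-formula over $vK$ in the single parameter $v(y)$; hence $\alpha(v(y)) = v(y')$ forces $J$, and therefore $g$, to be the same for $y'$, and substituting into the identity $\ac(f(y')) = g(\ac(y'))$ while using that $\beta$ fixes $Kv\ni\ac(c_j)$ and sends $\ac(y)$ to $\ac(y')$ gives $\ac(f(y')) = \beta(\ac(f(y)))$. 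This establishes the reduction step, and with it the lemma.

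The main obstacle is the no-cancellation claim in the second paragraph: this is precisely where the two alternative hypotheses on $y$ get consumed, and one must verify that $g$ really does not vanish at $\ac(y)$ even when several monomials of $f$ share the minimal valuation (the transcendental case) --- or that no two can (the valuational case). A secondary point needing care is the bookkeeping with the language expansions $\Lvg$ and $\Lrf$ and with the phrase ``generated by'', which should be read at the level of groups, so that $\alpha$ and $\beta$ being automorphisms --- hence elementary over $vK$ and $Kv$ --- really is enough to carry out the transfer.
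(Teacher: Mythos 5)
Your proposal is correct and follows essentially the same route as the paper: in both, the heart is that $v(f(y))$ and $\ac(f(y))$ are computed by a residue-field polynomial determined only by $f$ and the valuation comparisons among its terms (the two alternative hypotheses ruling out cancellation), that $\alpha$ transports exactly this data from $y$ to $y'$, and that elementarity of the automorphisms $\alpha,\beta$ together with relative quantifier elimination yields $\tp(y/K)=\tp(y'/K)$. Your uniform computation via the set $J$ of indices achieving the minimal value replaces the paper's induction on degree in the transcendental case, and you obtain the type equality first and recover $\sigma|_{K(y)}=\varphi$ from homogeneity afterwards, but these are presentational variants of the same argument.
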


\begin{proof}
	First, note that since $\phi$ is a valued field automorphism, by choice of $\alpha$ we have $\alpha(v(x)) = v(\phi(x))$ for all $x\in K(y)$. We claim that we also have $\beta(\ac(x)) = \ac(\phi(x))$ for all $x\in K(y)$. To prove this, we will first show that for every polynomial $p(X)\in K[X]$, there exists a polynomial $\bar p(X)\in Kv[X]$ such that $\ac(p(y)) = \bar p(\ac(y))$. Note that $\bar p(X)$ will not in general be the residue polynomial of $p(X)$, but a separate polynomial as described below.
	
	Suppose $v(y^n) \notin vK$ for any nonzero $n\in\mathbb Z$ and fix a polynomial $p(X)\in K[X]$. If two distinct terms of $p(y)$, say $z_1y^{n_1}$ and $z_2y^{n_2}$ have the same valuation, then we must have $v(y^{n_2-n_1}) = v(z_1/z_2) \in vK$, which is impossible. Thus, $p(y)$ has a term $zy^n$ of least valuation and $\ac(p(y)) = \ac(z)\ac(y)^n$, a polynomial in $Kv[\ac(y)]$.
	
	On the other hand, suppose $\ac(y)$ is transcendental over $Kv$. In this case, we proceed by induction on the degree of $p$. If $\deg(p) = 0$ then $p(X) = z$ for some $z\in K$ and $\ac(p(y)) = \ac(z)$. If $\deg(p) = n > 0$, then we can write $p(X) = z + Xq(X)$ for some $z\in K$ and some polynomial $q(X)$ of degree less than $n$. By induction, $\ac(q(y)) = \bar q(\ac(y))$ for some polynomial $\bar q$. Since $\ac(y)$ is transcendental over $Kv$, we must have $\ac(z) \neq \ac(yq(y)) = \ac(y)\bar q(\ac(y))$, and so $\ac(p(y))$ must be one of $\ac(z)$, $\ac(y)\bar q(\ac(y))$, or $\ac(z)+\ac(y)\bar q(\ac (y))$, depending on the relationship between $v(z)$ and $v(yq(y))$. In any case, $\ac(p(y))$ is a polynomial in $Kv[\ac(y)]$, completing the induction.
	
	In both cases, we showed that $\ac(p(y)) = \bar p(\ac(y))$ for some polynomial $\bar p(X)\in Kv[X]$, as desired. Note that the process of determining $\bar p(X)$ depended only on the original polynomial $p(X)$ and the valuations of the terms of $p(y)$. Because $\alpha(v(x)) = v(\phi(x))$ for all $x\in K(y)$, an identical argument shows that the same polynomial $\bar p(X)$ satisfies $\ac(p(y')) = \bar p(\ac(y'))$. Then, given any polynomial $p(X)\in K[X]$, we have
	\[ \beta(\ac(p(y))) = \beta(\bar p(\ac(y))) = \bar p(\beta(\ac(y))) = \bar p(\ac(y')) = \ac(p(y')) = \ac(\phi(p(y))) \]
	by choice of $\beta$. Since every element $x\in K(y)$ can be written as a rational function in $y$ and the angular component map is multiplicative, we can easily extend this result to the entire field.
	
	Finally, by the above observations, relative quantifier elimination, and the fact that $\alpha$ and $\beta$ are elementary maps, it follows that $\phi: K(y) \to M$ is a partial elementary map, and hence can be extended to an automorphism $\sigma$ of $\M$.
\end{proof}

We can now prove Theorem \ref{isolating}. The proof follows the outline of \cite{Del78}, \cite{BB96}, and \cite{Bel99}, but with any references to the specific characteristic of the field replaced by Lemma \ref{automorph}.

\begin{proof}
	(of Theorem \ref{isolating})
	
	\newcase{1} Suppose $\tp(x/K)$ is immediate. Fix any strictly increasing well-ordered cofinal sequence $(\gamma_\rho)_{\rho<\kappa}$ of $I_K(x)$ and any sequence $(a_\rho)_{\rho<\kappa}$ with $a_\rho\in K$ such that $v(x-a_\rho) = \gamma_\rho$. We claim that the the set of formulas $\{v(x-a_\rho) = \gamma_\rho : \rho<\kappa\}$ completely determines $\tp(x/K)$.
	
	Note that by choice of $a_\rho$ and $\gamma_\rho$, for $\rho_1<\rho_2<\kappa$, we have
	\[ v(a_{\rho_2}-a_{\rho_1}) = v((a_{\rho_2}-x)+(x-a_{\rho_1})) = \min\{v(a_{\rho_2}-x),\ v(x-a_{\rho_1})\} = \gamma_{\rho_1} \]
	since $\gamma_{\rho_1} < \gamma_{\rho_2}$. Thus, for $\rho_1<\rho_2<\rho_3<\kappa$, we have
	\[ v(a_{\rho_2}-a_{\rho_1}) = \gamma_{\rho_1} < \gamma_{\rho_2} = v(a_{\rho_3}-a_{\rho_2}) \]
	and so $(a_\rho)_{\rho<\kappa}$ is a pseudo-convergent sequence. Moreover, since $v(x-a_\rho) = \gamma_\rho$ for all $\rho<\kappa$, $x$ is a pseudo-limit of $(a_\rho)_{\rho<\kappa}$.
	
	Suppose $x'\in M$ is another element with $v(x'-a_\rho) = \gamma_\rho$ for all $\rho<\kappa$. Then $x'$ is also a pseudo-limit of $(a_\rho)_{\rho<\kappa}$, and since $\K\prec\M$, both $x$ and $x'$ are pseudo-limits of transcendental type. Then by Theorem 2 of \cite{Kap42}, $K(x)$ and $K(x')$ are immediate extensions of $K$ and there exists a valued field isomorphism $\phi: K(x)\to K(x')$ fixing $K$ and sending $x$ to $x'$.
	
	Because $K(x)$ is an immediate extension of $K$, for any $y\in K(x)$, there must exist $b\in K$ with $\ac(b) = \ac(y)$ and $v(b)=v(y)$ by Fact~\ref{orthogonal}. Then
	\[ v(\phi(y)) = v(\phi(b)) = v(b) = v(y) . \]
	Moreover, since $\ac(b)=\ac(y)$, we must have $v(b-y) > v(b)$. Then
	\[ v(b-\phi(y)) = v(\phi(b-y)) > v(\phi(b)) = v(b) \]
	and so $\ac(\phi(y)) = \ac(b) = \ac(y)$.
	
	Thus, the value group map induced by $\phi$ is the identity on $vK = v(K(x)) = v(K(x'))$ and the residue field map induced by $\phi$ is the identity map on $Kv = (K(x))v = (K(x'))v$. It then follows from relative quantifier elimination that $\phi$ is a partial elementary map and can be extended to an automorphism $\sigma$ of $\M$. Because $\sigma(x) = \phi(x) = x'$, this automorphism demonstrates that $\tp(x/K) = \tp(x'/K)$ as desired.
	
	\newcase{2} Suppose $\tp(x/K)$ is residual; we must first show that there exists $a\in K$ and $\gamma\in vK$ as described in the theorem. Let $\gamma\in vK$ be the largest element of $I_K(x)$, and fix $a\in K$ such that $v(x-a) = \gamma$. If $\ac(x-a)\in Kv$ then there must exist some $b\in K$ with $\ac(x-a) = \ac(b)$ and $v(b) = \gamma$ by Fact \ref{orthogonal}. But then
	\[ v(x-(a+b)) = v((x-a)-b) > v(x-a) = \gamma , \]
	contradicting the maximality of $\gamma$. Thus, $\ac(x-a)\notin Kv$.
	
	Now, suppose $x'\in M$ is another element with $v(x'-a) = \gamma$, $\ac(x'-a) \notin Kv$, and $\tp(\ac(x-a)/Kv) = \tp(\ac(x'-a)/Kv)$. We wish to show that $\tp(x/K) = \tp(x'/K)$, which we will do by finding an $\Lpas$-automorphism of $\M$ that fixes $\K$ and maps $y=x-a$ to $y'=x'-a$.
	
	
	Since $\K\prec \M$ is an elementary extension, $\operatorname{acl}_\M(\K) = \K$; in particular, $K^{\operatorname{acl}}\cap M = K$, and so $y$ and $y'$ must both be transcendental over $K$. Thus, there is a field isomorphism $\phi: K(y)\to K(y')$ fixing $K$ and sending $y$ to $y'$. Moreover, $\phi$ is a valued field isomorphism since $v(y) = \gamma = v(y')$. Setting $\alpha: vM\to vM$ to be the identity automorphism, we have $\alpha(v(y)) = v(y')$.
	
	Since $\tp(\ac(x-a)/K) = \tp(\ac(x'-a)/K)$, there is an $\Lrf$-automorphism $\beta: Mv\to Mv$ with $\beta|_{Kv} = \id_{Kv}$ and $\beta(\ac(y)) = \ac(y')$. Finally, $\ac(y)$ and $\ac(y')$ must be transcendental over $Kv$ since $Kv\prec Mv$. Then by Lemma \ref{automorph}, $\tp(y/K) = \tp(y'/K)$, which implies $\tp(x/K) = \tp(x'/K)$.
	
	\newcase{3} Suppose $\tp(x/K)$ is valuational, fix any $a\in K$ with $v(x-a)\notin vK$, and suppose $x'\in M$ is another element with $v(x'-a)\notin vK$, $\tp(v(x-a)/vK) = \tp(v(x'-a)/vK)$, and $\tp(\ac(x-a)/Kv) = \tp(\ac(x'-a)/Kv)$. As in Case 2, it suffices to show that $\tp(y/K) = \tp(y'/K)$ for $y = x-a$ and $y' = x'-a$.
	
	Again following Case 2, $\K\prec \M$, which means $y$ and $y'$ are both transcendental over $K$ and there exists a field isomorphism $\phi: K(y)\to K(y')$ fixing $K$ and sending $y$ to $y'$. Moreover, we have $v(K(y)) = vK\oplus \mathbb Z v(y)$ and $v(K(y')) = vK\oplus \mathbb Z v(y')$ since $vK\prec vM$ and $v(y),v(y')\notin vK$; in particular, $v(K(y))$ is generated by $vK\cup\{y\}$ and $v(y^n) = nv(y) \notin vK$ for any $n\in\mathbb Z$, and similarly for $y'$. It then follows from Corollary 2.2.3 of \cite{EP05} that $\phi$ is a valued field isomorphism.
	
	Finally, by choice of $x'$, there exists an $\Lvg$-automorphism $\alpha$ of $vM$ that fixes $vK$ and such that $\alpha(v(y)) = v(y')$. Similarly, there exists an $\Lrf$-automorphism $\beta$ of $Mv$ that fixes $Kv$ and such that $\beta(\ac(y)) = \ac(y')$. Thus, by Lemma \ref{automorph}, $\tp(y/K) = \tp(y'/K)$, so $\tp(x/K) = \tp(x'/K)$.
\end{proof}

As a consequence of the above theorem, we can improve the equivalence of formulas provided by relative quantifier elimination.

\begin{prop} \label{betterqe}
	Suppose $\K$ is a henselian valued field in $\Lpas$ such that $\Th(\K)$ admits relative quantifier elimination. Let $\phi(x)$ be a formula in one valued field sort variable with parameters in $K$. Then $\phi(x)$ is equivalent to a finite disjunction of formulas of the form
	\[ \chi\left(v(x-c^1),\ldots,v(x-c^n),b^\VG\right) \wedge \rho\left(\ac(x-c^1),\ldots,\ac(x-c^n),b^\RF\right) \]
	where $\chi(x,\bar y)$ is an $\Lvg$-formula, $\rho(x,\bar y)$ is an $\Lrf$-formula, $c^1,\ldots,c^n$ are singletons in the $\VF$-sort, $b^\VF$ is a $\K$-tuple in the $\VG$-sort, and $b^\RF$ is a $\K$-tuple in the $\RF$-sort.
\end{prop}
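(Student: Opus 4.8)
The plan is to deduce this from Theorem \ref{isolating} by showing that the formulas of the displayed shape already separate complete $1$-types in the $\VF$-sort over $K$, and then run a standard Stone-space argument. Write $\mathcal C$ for the set of formulas $\psi(x)$ over $K$ in one $\VF$-variable that are equivalent to a finite disjunction of formulas of the displayed form. I would first check that $\mathcal C$ is closed under negation, finite conjunction, and finite disjunction: padding the tuple $(c^1,\dots,c^n)$ with dummy entries shows that a bare $\Lvg$-condition $\chi(v(x-\bar c),b^\VG)$ or a bare $\Lrf$-condition $\rho(\ac(x-\bar c),b^\RF)$ is already of the displayed form (take the missing conjunct to be a trivial truth such as $0=0$); concatenating the $\bar c$-tuples shows that the conjunction of two displayed formulas is displayed; and $\neg(\chi\wedge\rho)$ is $\neg\chi\vee\neg\rho$, a disjunction of two displayed formulas. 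By distributivity, $\mathcal C$ is closed under all finite Boolean operations, so the subsets of $S_1^{\VF}(K)$ cut out by $\mathcal C$-formulas form a Boolean subalgebra $\mathcal B$ of the clopen algebra of the Boolean space $S_1^{\VF}(K)$. Once we know that $\mathcal B$ separates points, it follows that $\mathcal B$ is the whole clopen algebra, i.e.\ every $\phi(x)$ over $K$ is equivalent to a $\mathcal C$-formula — equivalently, for each type $p\ni\phi$ one extracts a $\mathcal C$-formula $\psi_p\in p$ with $[\psi_p]\subseteq[\phi]$ and takes a finite subcover of the closed set $[\phi]$.

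It therefore suffices to separate types. Work in a monster model $\M\models\Th(\K)$ and let $x,x'\in M$ realize distinct $1$-types over $K$; I must find a $\mathcal C$-formula true of one and not the other. If $x\in K$ then $v(z-x)=\infty$ (i.e.\ $z=x$) works, and similarly if $x'\in K$, so assume $x,x'\notin K$ and, arguing by contraposition, suppose $x$ and $x'$ satisfy exactly the same $\mathcal C$-formulas over $K$. The key point is that this $\mathcal C$-type records, for every $c\in K$: the cut of $v(x-c)$ in $vK$ (each of $v(x-c)=\gamma$, $v(x-c)<\gamma$, $v(x-c)>\gamma$ is a $\mathcal C$-formula for $\gamma\in vK$, as is $v(x-c)=\infty$), whether $\ac(x-c)\in Kv$ (each $\ac(x-c)=r$, $r\in Kv$, is a $\mathcal C$-formula), the full $\Lvg$-type of $v(x-c)$ over $vK$, and the full $\Lrf$-type of $\ac(x-c)$ over $Kv$. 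From the first two items $I_K(x)=I_K(x')$ as subsets of $vK$, and (using Fact \ref{12-easy} to see that $\gamma=\max I_K(x)$ is witnessed exactly by the existence of $a\in K$ with $v(x-a)=\gamma$ and $\ac(x-a)\notin Kv$) $x$ and $x'$ lie in the same one of the three families of Theorem \ref{isolating}.

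Now run the three cases. If the type is immediate, choose $(a_\rho,\gamma_\rho)_{\rho<\kappa}$ with $a_\rho\in K$, $\gamma_\rho=v(x-a_\rho)$, and $(\gamma_\rho)$ strictly increasing and cofinal in $I_K(x)=I_K(x')$; since each formula $v(z-a_\rho)=\gamma_\rho$ lies in $\mathcal C$, $x'$ satisfies all of them too, so Theorem \ref{isolating}(1) applied to both $x$ and $x'$ gives $\tp(x/K)=\tp(x'/K)$. If the type is residual, choose $a\in K$, $\gamma\in vK$ with $v(x-a)=\gamma$ and $\ac(x-a)\notin Kv$; then $v(x'-a)=\gamma$, $\ac(x'-a)\notin Kv$, and $\tp(\ac(x'-a)/Kv)=\tp(\ac(x-a)/Kv)$ by the previous paragraph, so Theorem \ref{isolating}(2) gives $\tp(x/K)=\tp(x'/K)$. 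If the type is valuational, choose $a\in K$ with $v(x-a)\notin vK$; then $v(x'-a)\notin vK$, $\tp(v(x'-a)/vK)=\tp(v(x-a)/vK)$, and $\tp(\ac(x'-a)/Kv)=\tp(\ac(x-a)/Kv)$, so Theorem \ref{isolating}(3) gives $\tp(x/K)=\tp(x'/K)$. In every case we contradict $\tp(x/K)\ne\tp(x'/K)$, so the desired separating formula exists. The bulk of the work — and the only real obstacle — is the middle paragraph: reading off from a $\mathcal C$-type enough invariants (the cuts of the $v(x-c)$, and the membership and residue-type of the $\ac(x-c)$) to pin down the Delon family and the data determining the type, together with confirming that displayed formulas are closed under Boolean combinations so the Stone-space reduction is legitimate; the algebraic content is entirely carried by Theorem \ref{isolating}, which is already in hand.
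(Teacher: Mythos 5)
Your proof is correct and follows essentially the same route as the paper: both rest entirely on Theorem \ref{isolating} together with a compactness argument in the space of $1$-types over $K$. The only difference is packaging --- the paper extracts from each complete type containing $\phi$ a finite conjunction of ``good formulas'' implying $\phi$ and then covers $[\phi]$ by a second compactness step, whereas you phrase this as ``a point-separating Boolean subalgebra of the clopen algebra is everything''; your Boolean-closure check and your argument that agreement on the displayed formulas pins down the Delon family (and the data needed to invoke Theorem \ref{isolating}) are fine, and in fact make explicit a point the paper passes over quickly.
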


\begin{proof}
	For the duration of this proof, we will refer to formulas of the form $\chi\wedge\rho$ as in the statement of the Proposition as \emph{good formulas}. Note that all of the formulas occurring in the conclusion of Theorem \ref{isolating} are good formulas:
	\begin{enumerate}
		\item If $x$ is immediate then each formula has the form $v(x-a_\rho) = \gamma_\rho$ with $a_\rho\in K$ and $\gamma_\rho\in vK$.
		\item If $x$ is residual then each formula is an element of $\tp(\ac(x-a)/Kv)$ with $a\in K$, and hence has the form $\rho\left(\ac(x-a), b^\RF\right)$.
		\item If $x$ is valuational then each formula is an element of $\tp(v(x-a)/vK)$ or $\tp(\ac(x-a)/Kv)$ with $a\in K$, and so is a good formula in either case.
	\end{enumerate}
	Moreover, by a simple rearrangement, the conjunction of a finite set of good formulas is itself a good formula.
	
	Let $\{p_\alpha : \alpha<\kappa\}$ be the set of complete $K$-types containing $\phi(x)$. By Theorem \ref{isolating}, for each $\alpha<\kappa$ there is a partial type $\pi_\alpha(x)$ consisting only of good formulas such that $\pi_\alpha \vdash p_\alpha$; in particular, $\pi_\alpha\vdash \phi$. By compactness, this implication only requires a finite subset of $\pi_\alpha(x)$; let $\psi_\alpha(x)$ be the conjunction of this finite set, and note that $\psi_\alpha(x)$ is a good formula by the observation above.
	
	Ranging over $\alpha$, we have $\phi \vdash \bigvee_{\alpha<\kappa} \psi_\alpha(x)$. Of course, this statement is not first-order, due to the infinite disjunction. However, by a standard compactness argument, we can find a finite set $\{\psi_{\alpha_i}, \ldots, \psi_{\alpha_n}\}$ such that
	\[ \K\models \phi(x) \leftrightarrow \bigvee_{i=1}^n \psi_{\alpha_i}(x) .\]
	Since each $\psi_\alpha(x)$ is a good formula, this shows that $\phi(x)$ is equivalent to a finite disjunction of good formulas, as desired.
\end{proof}


\section{Calculating Burden}
\label{calculating}


Throughout this section, we assume that $\K = (K,vK,Kv)$ is a sufficiently saturated model of some theory $T$ of henselian valued fields in $\Lpas$ that admits relative quantifier elimination. For example $T$ might be strongly dependent or a theory of fields of characteristic $(0,0)$. In \cite{Cher14}, Chernikov gives a bound for $\bdn(T)$ in terms of $\bdn(T_\VG)$ and $\bdn(T_\RF)$ in the characteristic $(0,0)$ case, but the proof in that paper uses a Ramsey theory argument, and so the bound is very imprecise. The goal of this section is to improve Chernikov's bound and extend the result to apply to theories of any characteristic. First, we repeat two results from that paper that we will use throughout this section.

\begin{fact} \label{boolean}
	\cite[Lemma 7.1]{Cher14}
	\begin{enumerate}
		\item If $(\phi_{\alpha,0}(x,y_{\alpha,0})\vee \phi_{\alpha,1}(x,y_{\alpha,1}),\ a_\alpha,\ k_\alpha)_{\alpha<\kappa}$ is an (indiscernible) inp-pattern, then
		\[ \big(\phi_{\alpha, f(\alpha)}(x, y_{\alpha,f(\alpha)}), a_\alpha, k_\alpha\big)_{\alpha<\kappa} \]
		is also an (indiscernible) inp-pattern for some $f: \kappa\to\{0,1\}$.
		\item Let $(\phi_\alpha(x, y_\alpha), a_\alpha, k_\alpha)_{\alpha<\kappa}$ be an (indiscernible) inp-pattern and assume that
		\[ \phi_\alpha(x, a_{\alpha,i}) \leftrightarrow \psi_\alpha(x, b_{\alpha,i}) \]
		for all $\alpha<\kappa$, all $i<\omega$, and some (mutually indiscernible) $( b_\alpha)_{\alpha<\kappa}$. Then there is an (indiscernible) inp-pattern of the form $(\psi_\alpha(x, z_\alpha), b_\alpha, k_\alpha)_{\alpha<\kappa}$.
	\end{enumerate}
\end{fact}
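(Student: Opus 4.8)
The plan is to verify the defining conditions of an inp-pattern directly in each case, relying on two elementary observations. First, if the formulas $\bigl(\phi_{\alpha,0}\vee\phi_{\alpha,1}\bigr)(x,a_{\alpha,i})$, $i<\omega$, form a $k_\alpha$-inconsistent row, then so do the formulas $\phi_{\alpha,j}(x,a_{\alpha,i})$, $i<\omega$, for each fixed $j\in\{0,1\}$, since $\phi_{\alpha,j}$ implies $\phi_{\alpha,0}\vee\phi_{\alpha,1}$. Second, the consistency of $\pi(x)\cup\{\theta_\alpha(x,c_\alpha)\}_{\alpha<\kappa}$ is unaffected if each $\theta_\alpha(x,c_\alpha)$ is replaced by a logically equivalent formula. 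Consequently, in both parts the only condition that needs real work is consistency along paths.

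I would dispose of part (2) first, as it is essentially bookkeeping. The hypothesised equivalences $\phi_\alpha(x,a_{\alpha,i})\leftrightarrow\psi_\alpha(x,b_{\alpha,i})$ transfer, column by column, the $k_\alpha$-inconsistency of the $\phi_\alpha$-rows to the $\psi_\alpha$-rows, and transfer the consistency of $\pi(x)\cup\{\phi_\alpha(x,a_{\alpha,\eta(\alpha)})\}_{\alpha<\kappa}$ to $\pi(x)\cup\{\psi_\alpha(x,b_{\alpha,\eta(\alpha)})\}_{\alpha<\kappa}$ for every $\eta\colon\kappa\to\omega$; in the indiscernible case mutual indiscernibility of $(b_\alpha)_{\alpha<\kappa}$ is already part of the hypothesis, so there is nothing further to check.

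For part (1), the first observation makes the inconsistency of each row $\{\phi_{\alpha,f(\alpha)}(x,a_{\alpha,i})\}_{i<\omega}$ automatic for any $f\colon\kappa\to\{0,1\}$, so it remains only to choose $f$ so as to keep the consistency requirement. In the indiscernible case this is immediate: take a realization $d$ of $\pi(x)\cup\{(\phi_{\alpha,0}\vee\phi_{\alpha,1})(x,a_{\alpha,0})\}_{\alpha<\kappa}$ and, for each $\alpha$, let $f(\alpha)\in\{0,1\}$ be an index with $d\models\phi_{\alpha,f(\alpha)}(x,a_{\alpha,0})$. Since the definition of an indiscernible inp-pattern only demands consistency of the zeroth column, $d$ is exactly the witness required; and passing from $a_{\alpha,i}$ to the subtuple feeding $y_{\alpha,f(\alpha)}$ preserves mutual indiscernibility, because a fixed choice of coordinates from mutually indiscernible sequences is again mutually indiscernible. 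For a general inp-pattern one must instead have consistency along every path $\eta\colon\kappa\to\omega$, which no single realization provides; here I would first extract from the given pattern --- keeping the disjunctive row-formulas $\phi_{\alpha,0}\vee\phi_{\alpha,1}$ fixed and replacing only the parameter array --- an indiscernible inp-pattern of the same depth, via the Ramsey-and-compactness argument recalled in Section~\ref{inppatterns}, then apply the indiscernible case, and finally observe that the resulting indiscernible inp-pattern is in particular an inp-pattern. This produces an inp-pattern of the same depth with $\alpha$-th formula $\phi_{\alpha,f(\alpha)}$ (on a re-chosen parameter array, which suffices for the burden computations this Fact is used for).

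The genuine obstacle is precisely this last step of part (1): producing a single global $f$ that works along all of $\omega^\kappa$. Which disjunct a realization of a path uses varies with the path, and one cannot read off a uniform $f$ from finitely many realizations --- even a small finite block of disjunctive rows need not admit a common disjunct selection. What makes the statement true is that the full inp-pattern has consistent paths throughout $\omega^\kappa$, and the cleanest way to exploit this is to move to the indiscernible setting, where that abundance is already packaged into the single requirement on the zeroth column.
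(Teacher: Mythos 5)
The paper gives no proof of this statement to compare with: it is imported as a Fact, cited to Lemma 7.1 of \cite{Cher14}. Judged on its own merits, your argument for part (2) and for the indiscernible reading of part (1) is correct and is the standard one: a realization $d$ of the zeroth column of the disjunctive pattern lets you set $f(\alpha)$ to be a disjunct with $d\models\phi_{\alpha,f(\alpha)}(x,a_{\alpha,0})$; $k_\alpha$-inconsistency of each new row is inherited because $\phi_{\alpha,j}$ implies the disjunction, and mutual indiscernibility survives passing to subtuples. (Under mutual indiscernibility you in fact get consistency along every path, not just the zeroth column, since all path-tuples $(a_{\alpha,\eta(\alpha)})_{\alpha<\kappa}$ realize the same type over the base.) Your unease about the plain, non-indiscernible reading of part (1) is well founded: with the parameter array kept fixed, that reading is actually false. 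Take a single row with $\phi_0(x,y)=(x=y\wedge P(y))$, $\phi_1(x,y)=(x=y\wedge\neg P(y))$, and distinct parameters $a_i$ lying alternately in $P$ and in $\neg P$: the disjunctive row $\{x=a_i\}_{i<\omega}$ is an inp-pattern of depth $1$, yet neither disjunct alone gives one on the same parameters. This is exactly why the source lemma carries an indiscernibility hypothesis, and the indiscernible form is the only one this paper ever invokes (Fact \ref{boolean} is applied in Lemma \ref{technical} and Proposition \ref{singleton} to indiscernible inp-patterns). So your fallback for the general case --- re-extract an indiscernible inp-pattern with the same disjunctive formulas and depth, then select disjuncts, accepting a re-chosen parameter array --- is not a gap in your argument but the correct way to read the plain version of the statement, and it yields precisely the depth-preservation that the burden computations require.
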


\begin{fact} \label{sequences}
	\cite[Lemma 7.9]{Cher14}
	Let $(c_i)_{i\in I}$ be an indiscernible sequence of singletons. Then the function $(i,j) \mapsto v(c_j-c_i)$ with $i<j$ satisfies one of the following:
	\begin{enumerate}
		\item It is strictly increasing depending only on $i$ (so $(c_i)_{i\in I}$ is pseudo-convergent),
		\item It is strictly decreasing depending only on $j$ (so $(c_i)_{i\in I}$ taken in the reverse direction is pseudo-convergent), or
		\item It is constant (in this case $(c_i)_{i\in I}$ is referred to as a ``fan'').
	\end{enumerate}
\end{fact}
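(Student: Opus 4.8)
The plan is to use indiscernibility to collapse the two-place function $f(i,j) := v(c_j - c_i)$ (defined for $i<j$ in $I$) into a single trichotomy, and then to match each branch of the trichotomy with one of the three listed alternatives. We may assume $I$ is infinite and the $c_i$ pairwise distinct, since otherwise the sequence is constant and hence a fan, and since extending an indiscernible sequence and then restricting cannot affect the conclusion. The key observation is that ``$v(c_j - c_i) < v(c_k - c_j)$'' is the truth value at $(c_i, c_j, c_k)$ of the parameter-free $\Lpas$-formula $v(x_2 - x_1) < v(x_3 - x_2)$; since all triples $i<j<k$ from $I$ have the same order type and the value group is linearly ordered, exactly one of
\[ v(c_j - c_i) < v(c_k - c_j), \qquad v(c_j - c_i) = v(c_k - c_j), \qquad v(c_j - c_i) > v(c_k - c_j) \]
holds simultaneously for all $i<j<k$. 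These three cases will produce alternatives (1), (2), and (3) respectively.

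In the first case, applying Fact~\ref{12-easy}(1) with $a = c_i$, $b = c_j$, $c = c_k$ gives $v(c_j - c_i) = v(c_k - c_i)$ whenever $i<j<k$; hence for fixed $i$ the value $f(i,j)$ does not depend on $j>i$, say it equals $F(i)$, and for $i<i'$, choosing any $k>i'$ and examining the triple $i<i'<k$ gives $F(i) = f(i,i') < f(i',k) = F(i')$, so $F$ is strictly increasing. This is exactly the statement that $(c_i)$ is pseudo-convergent. The second case is the first applied to the sequence read in reverse order (which is again indiscernible), so $f$ is strictly decreasing and depends only on the larger of the two indices.

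The third case is the fan, and here I would upgrade ``depends on nothing nontrivial'' to ``$f$ is constant.'' Fix a triple $i<j<k$ and put $\gamma = v(c_j - c_i)$; the ultrametric inequality gives $v(c_k - c_i) \ge \gamma$, and the claim is that equality holds. If not, then by indiscernibility the strict inequality $v(c_k - c_i) > v(c_j - c_i)$ holds for every triple $i<j<k$; choosing $l>k$ and applying the case hypothesis to the triples $(j,k,l)$ and $(i,k,l)$ forces $v(c_k - c_i) = v(c_l - c_k) = \gamma$, a contradiction. Thus $f(i,j) = f(i,k)$ whenever $i<j<k$, and running the same argument on the reversed sequence gives $f(i,k) = f(j,k)$; together these show that $f$ takes a single value on all pairs.

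The main obstacle is the fan case: one must exclude the possibility that the ultrametric inequality $v(c_k - c_i) \ge \min\{v(c_k - c_j), v(c_j - c_i)\}$ is strict, which is not forced by any single triple and genuinely requires the four-point argument above together with indiscernibility. By contrast, the increasing and decreasing cases fall out directly from Fact~\ref{12-easy}.
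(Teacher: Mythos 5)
The paper does not prove this statement: it is imported verbatim as a Fact from Chernikov \cite[Lemma 7.9]{Cher14}, so there is no in-paper argument to compare yours against; judged on its own, your proof is correct. The reduction via indiscernibility to a single trichotomy on triples is the right move, Case 1 follows at once from the ultrametric identity (Fact \ref{12-easy}(1)) and yields both the dependence on $i$ alone and strict monotonicity, Case 2 is genuinely just Case 1 for the reversed sequence (which is again indiscernible and again falls under the same case hypothesis), and your four-point argument correctly excludes a strict ultrametric inequality in the fan case: the triples $(j,k,l)$ and $(i,k,l)$ force $v(c_k-c_i)=v(c_l-c_k)=v(c_k-c_j)=v(c_j-c_i)$. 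Two small points deserve one extra sentence each. First, the four-point step needs a fourth index above the triple; since you have already transferred the putative strict inequality to \emph{all} triples by indiscernibility, it suffices to run the argument on any four indices $i<j<k<l$ (or to extend the sequence, as you flag at the outset), so the phrase ``choosing $l>k$'' should be read that way. Second, the conclusion $f(i,j)=f(i,k)=f(j,k)$ for every triple gives global constancy of $f$ by chaining over the at most four indices occurring in any two pairs; this is immediate but worth stating, since ``constant'' in the Fact means constant as a map on all pairs, not merely on each triple.
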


Because there are definable surjections $v:K^\times\twoheadrightarrow vK$ and $\ac:K\twoheadrightarrow Kv$, we only need to consider inp-patterns where the variable is in the VF-sort. Combining Proposition \ref{betterqe} and Fact \ref{boolean}, we can already focus only on inp-patterns with very tame formulas, but before we can prove the main result, we need to restrict our focus to even more tame inp-patterns.

Throughout this section, we will write $(\phi_\alpha(x, y_\alpha, z_\alpha), (b_\alpha, c_\alpha))_{\alpha<\kappa}$ for indiscernible inp-patterns, where for each $\alpha<\kappa$
\begin{enumerate}
	\item $x$ is a singleton in the VF-sort,
	\item $y_\alpha$ is a tuple of VG-sort and RF-sort variables (we will use $y_\alpha^\VG$ to indicate the subtuple of $y_\alpha$ containing precisely the VG-sort parameters of $y_\alpha$, in the same order; similarly for $y_\alpha^\RF$),
	\item $b_\alpha = (b_{\alpha,i})_{i<\omega}$ is a sequence of VG-sort and RF-sort parameters corresponding to $y_\alpha$ (we will use $b_{\alpha,i}^\VG$ and  $b_{\alpha,i}^\RF$ to indicate the subtuples of $b_{\alpha,i}$ corresponding to $y_\alpha^\VG$ and $y_\alpha^\RF$, respectively),
	\item $z_\alpha$ is a tuple of VF-sort variables, and
	\item $c_\alpha = (c_{\alpha,i})_{i<\omega}$ is a sequence of VF-sort parameters corresponding to $z_\alpha$.
\end{enumerate}

We begin with a technical lemma which will allow us to replace an inp-pattern with another of the same depth with certain VF-sort parameters removed.

\begin{lemma} \label{technical}
	Assume $T$ and $\K$ are as above and let $(\psi_\alpha(x,y_\alpha, z_\alpha, z'_\alpha), (b_\alpha, c_\alpha, c'_\alpha))_{\alpha<\kappa}$ be an indiscernible inp-pattern with $x$ a singleton in the valued field sort. Assume moreover that for each $\alpha<\kappa$ there exist finitely many terms $\{t_\alpha^j : 1\leq j<n_\alpha\}$ in the $\VG$-sort and $\RF$-sort such that
	\begin{enumerate}
		\item none of the terms $t_\alpha^j$ contain the variable $x$, and
		\item viewing $\psi_\alpha$ as a string of symbols, whenever a variable from the tuple $z_\alpha$ occurs in $\psi_\alpha$, that occurrence is contained in a substring of $\psi_\alpha$ equal to one of the terms $t_\alpha^j$.
	\end{enumerate}
	Then for each $\alpha<\kappa$ there exists a tuple of $\VG$-sort and $\RF$-sort variables $y'_\alpha$, a corresponding parameter sequence $b'_\alpha$, and a formula $\phi'_\alpha(x,y'_\alpha,z'_\alpha)$ such that $((\phi'_\alpha(x,y'_\alpha,z'_\alpha), (b'_\alpha,c'_\alpha))_{\alpha<\kappa}$ is an indiscernible inp-pattern of the same depth $\kappa$.
\end{lemma}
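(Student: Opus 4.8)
The plan is to treat each term $t_\alpha^j$ as a black box: by hypothesis it is a $\VG$- or $\RF$-valued term in which $x$ does not occur, so on the parameter array it takes values in $vK$ or $Kv$ which may simply be adjoined to the $\VG$/$\RF$-parameter sequence, and replacing the $t_\alpha^j$ by fresh $\VG$/$\RF$-variables then eliminates $z_\alpha$ from the formula. In detail, fix $\alpha<\kappa$. First I would reduce to the case that the term-occurrences $t_\alpha^1,\dots,t_\alpha^{n_\alpha-1}$ are pairwise non-overlapping, by replacing each by the largest $x$-free $\VG$- or $\RF$-subterm of $\psi_\alpha$ containing a given occurrence of a variable from $z_\alpha$; since every occurrence of a $z_\alpha$-variable lies inside some $t_\alpha^j$, replacing all the $t_\alpha^j$ simultaneously by fresh variables $w_\alpha^1,\dots,w_\alpha^{n_\alpha-1}$ of the matching sorts produces a formula $\phi'_\alpha(x,y_\alpha,w_\alpha,z'_\alpha)$ from which $z_\alpha$ has vanished, while $y_\alpha$ and $z'_\alpha$ are untouched. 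Set $y'_\alpha:=y_\alpha w_\alpha$, and for $i<\omega$ put $d_{\alpha,i}^j:=t_\alpha^j(b_{\alpha,i},c_{\alpha,i},c'_{\alpha,i})$ and $b'_{\alpha,i}:=(b_{\alpha,i},d_{\alpha,i}^1,\dots,d_{\alpha,i}^{n_\alpha-1})$, $b'_\alpha:=(b'_{\alpha,i})_{i<\omega}$. By construction
\[ \phi'_\alpha(x,b'_{\alpha,i},c'_{\alpha,i}) \leftrightarrow \psi_\alpha(x,b_{\alpha,i},c_{\alpha,i},c'_{\alpha,i}) \]
for all $\alpha<\kappa$ and $i<\omega$; in fact these are literally the same condition on $x$, since substituting $b'_{\alpha,i}$ into the slot of $w_\alpha^j$ restores exactly the value $d_{\alpha,i}^j$ of $t_\alpha^j$.

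Granting this equivalence, Fact \ref{boolean}(2) reduces the whole lemma to a single point: the sequences $(b'_{\alpha,i}c'_{\alpha,i})_{i<\omega}$, for $\alpha<\kappa$, are mutually indiscernible; the resulting indiscernible inp-pattern $(\phi'_\alpha(x,y'_\alpha,z'_\alpha),(b'_\alpha,c'_\alpha))_{\alpha<\kappa}$ then has depth $\kappa$ because no row has been added or deleted. For mutual indiscernibility I would use the general fact that coordinatewise application of a fixed $\emptyset$-definable function preserves mutual indiscernibility of an array: the tuple $b'_{\alpha,i}c'_{\alpha,i}$ is the image of $b_{\alpha,i}c_{\alpha,i}c'_{\alpha,i}$ under the $\emptyset$-definable map $h_\alpha$ that evaluates the $t_\alpha^j$, keeps $b_{\alpha,i}$ and $c'_{\alpha,i}$, and discards $c_{\alpha,i}$. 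Since the original pattern is indiscernible, $(b_{\alpha,i}c_{\alpha,i}c'_{\alpha,i})_{i<\omega}$ is indiscernible over $B_\alpha:=\bigcup_{\beta\neq\alpha}\{b_{\beta,j}c_{\beta,j}c'_{\beta,j}:j<\omega\}$, hence over $\operatorname{dcl}(B_\alpha)$; applying $h_\alpha$ coordinatewise keeps it indiscernible over $\operatorname{dcl}(B_\alpha)$; and $\bigcup_{\beta\neq\alpha}\{b'_{\beta,j}c'_{\beta,j}:j<\omega\}\subseteq\operatorname{dcl}(B_\alpha)$ because each $h_\beta$ is $\emptyset$-definable. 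This is precisely mutual indiscernibility of the new array, so Fact \ref{boolean}(2) applies and finishes the proof.

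I expect the only real friction to lie in the bookkeeping of the first step: one has to arrange the simultaneous term-substitution so that it is syntactically unambiguous (hence the passage to non-overlapping, maximal $x$-free $\VG$/$\RF$-subterms) and so that $z_\alpha$ genuinely disappears from $\phi'_\alpha$ while $z'_\alpha$, $y_\alpha$, and the whole $\VG$/$\RF$-parameter data survive unchanged. Everything after that is a mechanical application of Fact \ref{boolean}(2) together with the standard indiscernibility-transfer argument, neither of which requires anything specific to valued fields.
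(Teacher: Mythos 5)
Your proposal is correct and follows essentially the same route as the paper's proof: replace the $x$-free terms $t_\alpha^j$ by fresh $\VG$/$\RF$-variables, set $b'_{\alpha,i}$ by evaluating those terms on $(b_{\alpha,i},c_{\alpha,i},c'_{\alpha,i})$, observe the formula-by-formula equivalence, note that mutual indiscernibility is preserved because the new parameters are term-images of the old ones, and conclude with Fact \ref{boolean}(2). The extra bookkeeping you flag (passing to maximal non-overlapping subterms and the dcl-based indiscernibility transfer) only spells out details the paper leaves implicit.
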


\begin{proof}	
	We build $\phi'_\alpha$ from $\psi_\alpha$ by introducing new variable symbols to replace the terms containing $z_\alpha$. Fix $\alpha<\kappa$ and let $w_\alpha^1,\ldots,w_\alpha^{n_\alpha}$ be new variable symbols. For each $j\leq n_\alpha$ and $i<\omega$, let $d_{\alpha,i}^j = t_\alpha^j(b_{\alpha,i},c_{\alpha,i},c'_{\alpha,i})$. Let $y'_\alpha = y_\alpha w_\alpha^1\ldots w_\alpha^{n_\alpha}$, and let $b'_{\alpha,i} = b_{\alpha,i}d_{\alpha,i}^1\ldots d_{\alpha,i}^{n_\alpha}$ such that the variables $w_\alpha^j$ correspond to the parameters $d_{\alpha,i}^j$.
	
	Let $\phi'_\alpha(x,y'_\alpha,z'_\alpha)$ be the same formula as $\psi(x,y_\alpha,z_\alpha,z'_\alpha)$, but with the new variables $w_\alpha^1,\ldots,w_\alpha^{n_\alpha}$ replacing each occurrence of the terms $t_\alpha^1,\ldots,t_\alpha^{n_\alpha}$. By assumption, $z_\alpha$ no longer occurs in the formula after making this substitution, and so we can remove it from the list of variables.
	
	Apply the above process for each $\alpha<\kappa$, and note that
	\[ \K\models \phi'_\alpha(x,b'_{\alpha,i},c'_{\alpha,i}) \leftrightarrow \psi_\alpha(x,b_{\alpha,i},c_{\alpha,i},c'_{\alpha,i}) \]
	for each $\alpha<\kappa$ and $i<\omega$. Since each coordinate of $b'_{\alpha,i}$ was built from a term including only parameters from $(b_{\alpha,i}, c_{\alpha,i}, c'_{\alpha,i})$, the array $\{(b'_\alpha,c'_\alpha) : \alpha<\kappa\}$ is mutually indiscernible. Then applying Fact \ref{boolean}(2),  $(\phi'_\alpha(x,y'_\alpha,z'_\alpha), (b'_\alpha,c'_\alpha))_{\alpha<\kappa}$ is an indiscernible inp-pattern of depth $\kappa$, as desired.
\end{proof}

We will use Lemma \ref{technical} to prove a sequence of propositions towards a proof of Theorem \ref{mainthm}. The propositions will allow us to replace a general inp-pattern for $T$ with progressively less complicated inp-patterns, until the pattern is sufficiently simple for us to deduce the theorem.

\begin{prop} \label{singleton}
	Assume $T$ and $\K$ are as above and let $(\phi_\alpha(x,y_\alpha, z_\alpha), (b_\alpha, c_\alpha))_{\alpha<\kappa}$ be an indiscernible inp-pattern with $x$ a singleton in the valued field sort. Then we can construct a new inp-pattern $(\phi'_\alpha(x,y'_\alpha, z'_\alpha), (b'_\alpha, c'_\alpha))_{\alpha<\kappa}$ of the same depth, such that each formula $\phi'_\alpha$ has the form
	\[ \chi_\alpha\big(v(x-z'_\alpha), (y'_\alpha)^\VG\big) \wedge \rho_\alpha\big(\ac(x-z'_\alpha), (y'_\alpha)^\RF\big) \]
	and such that:
	\begin{enumerate}
		\item $\chi_\alpha$ and $\rho_\alpha$ are formulas in $\Lvg$ and $\Lrf$, respectively, and
		\item $z'_\alpha$ is a singleton in the valued field sort.
	\end{enumerate}
\end{prop}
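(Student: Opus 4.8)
The plan is to simplify $\phi_\alpha$ in three stages: put it in the ``good form'' of Proposition~\ref{betterqe}; use the valuation arithmetic of Fact~\ref{12-easy} to cut the valued field parameters down to a single one; and then invoke Lemma~\ref{technical} to absorb the remaining valued field parameters into value group and residue field variables.

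\emph{Stage 1 (good form).} Apply Proposition~\ref{betterqe}, in $\M$ itself, to $\phi_\alpha(x,b_{\alpha,0},c_{\alpha,0})$ viewed as a formula in the single free variable $x$ with the remaining coordinates as parameters: it becomes equivalent to some finite disjunction $\bigvee_k\chi_k(v(x-c^1),\ldots,v(x-c^{m}),\ldots)\wedge\rho_k(\ac(x-c^1),\ldots,\ac(x-c^{m}),\ldots)$ of good formulas, with new valued field parameters $c^1,\ldots,c^{m}\in M$. The existence of such an equivalence is a first-order property of $b_{\alpha,0},c_{\alpha,0}$ (existentially quantifying out the new parameters, for a fixed choice of the $\chi_k,\rho_k$), so by indiscernibility the same property holds of $b_{\alpha,i},c_{\alpha,i}$ for every $i$; picking witnesses gives, for each $i$, parameters making $\phi_\alpha(x,b_{\alpha,i},c_{\alpha,i})$ equivalent to the \emph{same} disjunction of good formulas. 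Adjoining these parameters to the array produces an inp-pattern of the same depth whose formulas are in good form; re-extracting an indiscernible inp-pattern (Section~\ref{inppatterns}) and collapsing the outer disjunction by finitely many applications of Fact~\ref{boolean}(1), we may assume
\[\phi_\alpha=\chi_\alpha\big(v(x-c_\alpha^1),\ldots,v(x-c_\alpha^{m}),y_\alpha^\VG\big)\wedge\rho_\alpha\big(\ac(x-c_\alpha^1),\ldots,\ac(x-c_\alpha^{m}),y_\alpha^\RF\big)\]
where the $c_\alpha^1,\ldots,c_\alpha^{m}$ are valued field parameters among the coordinates of $c_\alpha$.

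\emph{Stage 2 (one valued field parameter).} Fix $\alpha$. For each $j^*\le m$, the condition ``$v(x-c_\alpha^{j^*})\ge v(x-c_\alpha^j)$ for all $j$'' (i.e.\ $c_\alpha^{j^*}$ is a closest point to $x$) is, by Fact~\ref{12-easy}, equivalent to
\[\bigwedge_{j\ne j^*}\neg\big(v(x-c_\alpha^{j^*})=v(c_\alpha^j-c_\alpha^{j^*})\wedge\ac(x-c_\alpha^{j^*})=\ac(c_\alpha^j-c_\alpha^{j^*})\big),\]
which mentions $x$ only through $v(x-c_\alpha^{j^*})$ and $\ac(x-c_\alpha^{j^*})$. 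Since one of the finitely many $c_\alpha^j$ is always closest to $x$, $\phi_\alpha$ is equivalent to the disjunction over $j^*$ of $\phi_\alpha$ together with this condition. Refining each such disjunct by the finitely many possible orderings of $v(x-c_\alpha^{j^*})$ against each $v(c_\alpha^j-c_\alpha^{j^*})$, and of $\ac(x-c_\alpha^{j^*})$ against each $\ac(c_\alpha^j-c_\alpha^{j^*})$ in the tie case, Fact~\ref{12-easy} rewrites every $v(x-c_\alpha^j)$ and $\ac(x-c_\alpha^j)$ as a term in $v(x-c_\alpha^{j^*})$, $\ac(x-c_\alpha^{j^*})$ and the $x$-free parameters $v(c_\alpha^j-c_\alpha^{j^*})$, $\ac(c_\alpha^j-c_\alpha^{j^*})$. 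Thus $\phi_\alpha$ is equivalent to a finite disjunction of good formulas, each referring to $x$ only via $v(x-c_\alpha^{j^*})$ and $\ac(x-c_\alpha^{j^*})$, with every surviving occurrence of the other $c_\alpha^j$ inside an $x$-free value group or residue field term; one more application of Fact~\ref{boolean}(1) fixes a single index $j^*(\alpha)$ for each $\alpha$.

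\emph{Stage 3 (clean-up), and the main obstacle.} In the formula now in hand, the terms $v(c_\alpha^j-c_\alpha^{j^*(\alpha)})$ and $\ac(c_\alpha^j-c_\alpha^{j^*(\alpha)})$ are $x$-free and contain every occurrence of the ``extra'' parameters $c_\alpha^j$ with $j\ne j^*(\alpha)$, so Lemma~\ref{technical} — applied with $z'_\alpha$ the variable for $c_\alpha^{j^*(\alpha)}$ and $z_\alpha$ the variables for the remaining $c_\alpha^j$ — yields an indiscernible inp-pattern of the same depth in which those terms have become fresh value group and residue field variables. This leaves $\phi'_\alpha$ in the required shape $\chi_\alpha(v(x-z'_\alpha),(y'_\alpha)^\VG)\wedge\rho_\alpha(\ac(x-z'_\alpha),(y'_\alpha)^\RF)$ with $z'_\alpha$ a singleton valued field variable. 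The substantive step is Stage~2: the point that needs care is that the ``closest point'' condition and all of the ensuing case distinctions can be phrased using only the retained terms $v(x-z'_\alpha)$, $\ac(x-z'_\alpha)$ together with $x$-free value group and residue field terms — this is exactly what Fact~\ref{12-easy} provides (modulo the degenerate case $x=c_\alpha^{j^*}$, which is handled directly) — so that after Lemma~\ref{technical} the surviving disjunct genuinely splits into a value group conjunct and a residue field conjunct. Stages~1 and~3 are then bookkeeping on top of Proposition~\ref{betterqe} and Lemma~\ref{technical}.
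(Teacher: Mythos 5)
Your proposal is correct and takes essentially the same route as the paper: Proposition \ref{betterqe} together with Fact \ref{boolean} to reach the good form, the valuation/angular-component arithmetic of Fact \ref{12-easy} to cut down to a single valued-field parameter per row, and Lemma \ref{technical} to absorb the remaining parameters into $\VG$- and $\RF$-variables. The only difference is organizational: you make the ``closest parameter'' case analysis definable and select a disjunct via Fact \ref{boolean}(1), whereas the paper fixes a realization $a$ of the first column and recursively eliminates parameters two at a time (the min and max of the surviving indices), using $a$'s relation to them to choose among the five cases --- both versions are sound.
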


\begin{proof}
	We wish to apply Lemma \ref{technical}. Fix some $\alpha<\kappa$. By Proposition \ref{betterqe}, Fact \ref{boolean}, and the indiscernibility of $(b_\alpha, c_\alpha)$, we may assume that $\phi_\alpha(x,y_\alpha,z_\alpha)$ has the form
	\[ \chi_\alpha\big(v(x-z_\alpha^1),\ldots,v(x-z_\alpha^n), y_\alpha^\VG\big) \wedge \rho_\alpha\big(\ac(x-z_\alpha^1),\ldots, \ac(x-z_\alpha^n), y_\alpha^\RF\big) \]
	where $z_\alpha = (z_\alpha^1, \ldots, z_\alpha^n)$ for some $n\in\mathbb N$.
	
	If $n=1$ then we may take $\psi_\alpha(x,y_\alpha,z_\alpha,z'_\alpha) = \phi_\alpha(x,y_\alpha,z'_\alpha)$ ($z_\alpha$ will be an unused variable), $c'_\alpha = c_\alpha$, and the set of terms $t_\alpha^j$ to be the empty set. Otherwise, let $\theta = \phi_\alpha$ and fix a realization $a$ of $\{\phi_\alpha(x,b_{\alpha,0}, c_{\alpha,0}) : \alpha<\kappa\}$.
	
	Let $I_\theta$ be the set of indices $i$ such that either $v(x-z_\alpha^i)$ or $\ac(x-z_\alpha^i)$ appears in $\theta$. We proceed recursively, at each step replacing $\theta$ with a new formula $\theta'$ such that $|I_{\theta'}| = |I_\theta|-1$. Set $r = \min(I_\theta)$ and $s = \max(I_\theta)$; we break into cases based on the relationships between $a$, $c_{\alpha,0}^r$, and $c_{\alpha,0}^s$, following the proof of Lemma 7.12 of \cite{Cher14}.
	
	\newcase{1} If $v(a-c_{\alpha,0}^r) < v(c_{\alpha,0}^s- c_{\alpha,0}^r)$, then $v(a-c_{\alpha,0}^r) = v(a-c_{\alpha,0}^s)$ and $\ac(a-c_{\alpha,0}^r) = \ac(a-c_{\alpha,0}^s)$ by Fact \ref{12-easy}. Take $\theta'(x,y_\alpha,z_\alpha)$ to be the conjunction of
	\begin{enumerate}
		\item $\theta$ with each occurrence of $v(x-z_\alpha^s)$ replaced by $v(x-z_\alpha^r)$ and each occurrence of $\ac(x-z_\alpha^s)$ replaced by $\ac(x-z_\alpha^r)$, and
		\item The formula $v(x-z_\alpha^r) < v(z_\alpha^s-z_\alpha^r)$.
	\end{enumerate}
	
	\nogapcase{2} If $v(a-c_{\alpha,0}^r) > v(c_{\alpha,0}^s - c_{\alpha,0}^r)$ then $v(a-c_{\alpha,0}^s) = v(c_{\alpha,0}^s-c_{\alpha,0}^r)$ and $\ac(a-c_{\alpha,0}^s) = \ac(c_{\alpha,0}^s-c_{\alpha,0}^r)$. Take $\theta'(x,y_\alpha,z_\alpha)$ to be the conjunction of
	\begin{enumerate}
		\item $\theta$ with each occurrence of $v(x-z_\alpha^s)$ replaced by $v(z_\alpha^s-z_\alpha^r)$ and each occurrence of $\ac(x-z_\alpha^s)$ replaced by $\ac(z_\alpha^s-z_\alpha^r)$, and
		\item The formula $v(x-z_\alpha^r) > v(z_\alpha^s-z_\alpha^r)$.
	\end{enumerate}
	
	\nogapcase{3} If $v(a-c_{\alpha,0}^s) < v(c_{\alpha,0}^s- c_{\alpha,0}^r)$, proceed symmetrically to case 1.
	
	\newcase{4} If $v(a-c_{\alpha_0}^s) > v(c_{\alpha,0}^s - c_{\alpha,0}^r)$, proceed symmetrically to case 2.
	
	\newcase{5} If $v(a-c_{\alpha,0}^r) = v(a-c_{\alpha,0}^s) = v(c_{\alpha,0}^s - c_{\alpha,0}^r)$ then by Fact \ref{12-easy} again, we must have $\ac(a-c_{\alpha,0}^s) = \ac(a-c_{\alpha,0}^r) - \ac(c_{\alpha,0}^s-c_{\alpha,0}^r)$. Take $\theta'(x,y_\alpha,z_\alpha)$ to be the conjunction of
	\begin{enumerate}
		\item $\theta$ with each occurrence of $v(x-z_\alpha^s)$ replaced by $v(z_\alpha^s-z_\alpha^r)$ and each occurrence of $\ac(x-z_\alpha^s)$ replaced by $\ac(x-z_\alpha^r) - \ac(z_\alpha^s-z_\alpha^r)$, and
		\item The formula $v(x-z_\alpha^r) = v(z_\alpha^s-z_\alpha^r) \wedge \ac(x-z_\alpha^r) \neq \ac(z_\alpha^s-z_\alpha^r)$.
	\end{enumerate}
	
	\noindent Note that in each case, we have $\K\models \theta'(a,b_{\alpha,0},c_{\alpha,0})$ by construction, and that $|I_{\theta'}| = |I_\theta|-1$. If $|I_{\theta'}| = 1$, let $r$ be the single index in $I_{\theta'}$, set $c'_\alpha = (c_{\alpha,i}^r)_{i<\omega}$, and set $\psi_\alpha(x,y_\alpha,z_\alpha,z_\alpha')$ to be $\theta'$ with each occurrence of $v(x-z_\alpha^r)$ replaced by $v(x-z'_\alpha)$ and each occurrence of $\ac(x-z_\alpha^r)$ replaced by $\ac(x-z'_\alpha)$. Otherwise, repeat the process recursively with $\theta'$ in place of $\theta$.
	
	Since $c_\alpha'$ is a subtuple of $c_\alpha$, the array $\{(b_\alpha, c_\alpha,c'_\alpha) : \alpha<\kappa\}$ is mutually indiscernible. By choice of $\theta'$ and $\psi_\alpha$, any realization of $\Psi_\alpha = \{\psi_\alpha(x,b_{\alpha,i}, c_{\alpha,i}, c'_{\alpha,i}) : i<\omega\}$ would also be a realization of $\{\phi_\alpha(x,b_{\alpha,i},c_{\alpha,i}) : i<\omega\}$, and so $\Psi_\alpha$ is inconsistent. Thus, $(\psi_\alpha(x,y_\alpha,z_\alpha,z'_\alpha), (b_\alpha,c_\alpha,c'_\alpha))_{\alpha<\kappa}$ is an indiscernible inp-pattern.
	
	If we then take the collection $\{v(z_\alpha^i-z_\alpha^j) : 1\leq i,j\leq n\} \cup \{\ac(z_\alpha^i-z_\alpha^j) : 1\leq i,j\leq n\}$ for the set of terms $t^i_\alpha$, we can apply Lemma \ref{technical} to obtain $((\phi'_\alpha(x,y'_\alpha,z'_\alpha), (b'_\alpha,c'_\alpha))_{\alpha<\kappa}$, a new indiscernible inp-pattern of depth $\kappa$. By choice of $\psi_\alpha$ and the fact that $z'_\alpha$ is a singleton for all $\alpha<\kappa$, the formulas in the new inp-pattern have the desired form.
\end{proof}

We have just shown that we can replace any inp-pattern with one in which there is only one $\VF$-sort parameter in each row. In the next two propositions, we show that we can find a new inp-pattern in which the $\VF$-sort parameter is constant within each row, and then one in which there is no $\VF$-sort parameter in any row.

\begin{prop}
	Assume $T$ and $\K$ are as above, and let $(\phi_\alpha(x,y_\alpha,z_\alpha), (b_\alpha,c_\alpha))_{\alpha<\kappa}$ be an indiscernible inp-pattern with $x$ a singleton in the valued field sort. Then we can construct a new indiscernible inp-pattern $(\phi'_\alpha(x,y'_\alpha,z'_\alpha), (b'_\alpha,c'_\alpha))_{\alpha<\kappa}$ of the same depth, such that for each $\alpha<\kappa$,
	\begin{enumerate}
		\item the formula $\phi'_\alpha$ has the form described in Proposition \ref{singleton}, and
		\item the $\VF$-sort sequence $c'_\alpha = (c'_{\alpha,i})_{i<\omega}$ is a constant sequence of singletons.
	\end{enumerate}
\end{prop}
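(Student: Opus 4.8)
The plan is to set things up so that Lemma~\ref{technical} can be applied with $c'_\alpha$ a constant sequence. First I would invoke Lemma~\ref{singleton} together with Fact~\ref{boolean} to arrange that each $\phi_\alpha$ already has the form $\chi_\alpha(v(x-z_\alpha),y_\alpha^\VG)\wedge\rho_\alpha(\ac(x-z_\alpha),y_\alpha^\RF)$ with $z_\alpha$ a singleton and corresponding $\VF$-sort sequence $c_\alpha=(c_{\alpha,i})_{i<\omega}$, and fix a realization $a$ of $\{\phi_\alpha(x,b_{\alpha,0},c_{\alpha,0}):\alpha<\kappa\}$. The goal is then, for each $\alpha$, to choose a \emph{single} element $e_\alpha$ — essentially a fixed element of the column — and to rewrite $\phi_\alpha(x,b_{\alpha,i},c_{\alpha,i})$ in terms of $v(x-e_\alpha)$, $\ac(x-e_\alpha)$ and terms not involving $x$ (built from $c_{\alpha,i}$ and $e_\alpha$), conjoined with a case formula that is satisfied by $a$; feeding those $x$-free terms to Lemma~\ref{technical} as the $t_\alpha^j$ and taking $c'_\alpha=(e_\alpha)_{i<\omega}$ then deletes the sequence $c_\alpha$ and produces the desired pattern, still in the form of Lemma~\ref{singleton}.

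To make the rewriting uniform in $i$ I would use the trichotomy \cite[Lemma 7.9]{Cher14}: after extending the index set and reversing if necessary, $c_\alpha$ is pseudo-convergent or a fan. Here one uses two observations. First, $a$ cannot be a pseudo-limit of $c_\alpha$: along an indiscernible sequence $v(c_{\alpha,i}-c_{\alpha,j})$ and $\ac(c_{\alpha,i+1}-c_{\alpha,i})$ are uniformly definable, so were $a$ a pseudo-limit one would get $a\models\phi_\alpha(x,b_{\alpha,i},c_{\alpha,i})$ for every $i$, contradicting $k_\alpha$-inconsistency; consequently, after discarding a finite initial segment, $v(a-c_{\alpha,i})$ and $v(c_{\alpha,i}-c_{\alpha,0})$ are each independent of $i$. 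Second, once those two values are fixed, comparing them and $\ac(a-c_{\alpha,0})$ with $\ac(c_{\alpha,i}-c_{\alpha,0})$ via Fact~\ref{12-easy} expresses $v(x-c_{\alpha,i})$ and $\ac(x-c_{\alpha,i})$, for $x$ in the relevant region, as $v(x-c_{\alpha,0})$, $\ac(x-c_{\alpha,0})$, or one of these modified by the $x$-free terms $v(c_{\alpha,i}-c_{\alpha,0})$, $\ac(c_{\alpha,i}-c_{\alpha,0})$; one then takes $\psi_\alpha$ to be $\phi_\alpha$ with these substitutions made, conjoined with the formula naming the region (which $a$ satisfies by construction), and takes $e_\alpha=c_{\alpha,0}$, so that Lemma~\ref{technical} yields the pattern.

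The step I expect to be the main obstacle is the fan case at the critical level $v(a-c_{\alpha,0})=\gamma_\alpha:=v(c_{\alpha,i}-c_{\alpha,0})$, because there the residues $\ac(c_{\alpha,i}-c_{\alpha,0})$ are pairwise distinct, so the clean substitution $\ac(x-c_{\alpha,i})=\ac(x-c_{\alpha,0})-\ac(c_{\alpha,i}-c_{\alpha,0})$ (Fact~\ref{12-easy}(3)) is valid only when $x$ avoids the residue ball of $c_{\alpha,i}$, whereas $a$ may lie in one of them, say that of $c_{\alpha,i_1}$. I would dispatch this by deleting $c_{\alpha,i_1}$ from the column when it is not the element whose instance $a$ realizes — a subsequence of a mutually indiscernible array is again one and remains $k_\alpha$-inconsistent — and, in the remaining case, where $\phi_\alpha(x,b_{\alpha,i},c_{\alpha,i})$ forces $x$ into the residue ball of $c_{\alpha,i}$, by observing that $\phi_\alpha(x,b_{\alpha,i},c_{\alpha,i})$ then becomes equivalent to a condition of the form $v(x-c_{\alpha,0})=\gamma_\alpha\wedge\ac(x-c_{\alpha,0})=\ac(c_{\alpha,i}-c_{\alpha,0})$ together with a residue-field condition, so that the row may be replaced outright by the $2$-inconsistent column $\{v(x-c_{\alpha,0})=\gamma_\alpha\wedge\ac(x-c_{\alpha,0})=\ac(c_{\alpha,i}-c_{\alpha,0})\}_i$, which is already in the required form with constant $\VF$-parameter $c_{\alpha,0}$. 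In every branch the routine bookkeeping is to check that the modified array stays mutually indiscernible (the new parameters being definable over the old) and that $a$ — or, if some instances have been discarded or replaced, a re-chosen realization of the retained column — still realizes the zeroth column of the modified pattern.
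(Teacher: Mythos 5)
Your overall blueprint (reduce via Proposition \ref{singleton}, analyse the relation of a fixed realization $a$ to each row via Fact \ref{12-easy} and the pseudo-convergent/fan trichotomy, then absorb the $x$-free terms with Lemma \ref{technical}) is the paper's, but your choice of constant parameter $e_\alpha=c_{\alpha,0}$, an element of the row itself, creates a genuine gap. If you keep the index $i=0$, the augmented row $(b_{\alpha,i},c_{\alpha,i},c_{\alpha,0})_{i<\omega}$ is not indiscernible (its $0$-th entry repeats $c_{\alpha,0}$, and the terms $v(c_{\alpha,0}-c_{\alpha,0})=\infty$ and $\ac(c_{\alpha,0}-c_{\alpha,0})=0$ degenerate), so Lemma \ref{technical}, which requires a mutually indiscernible array as input, does not apply. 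If instead you drop index $0$ -- or discard an initial segment, delete an element $c_{\alpha,i_1}$, or replace the row outright, as you propose in various cases -- then $a$ no longer witnesses the consistency of the new first column: $a$ satisfies the $\chi_\alpha,\rho_\alpha$ part only with the parameters $b_{\alpha,0}$, and there is no reason for it to satisfy $\chi_\alpha(v(a-c_{\alpha,0}),b_{\alpha,1}^{\VG})\wedge\rho_\alpha(\ac(a-c_{\alpha,0}),b_{\alpha,1}^{\RF})$ (indeed, in the cases where its relation to the row is uniform, satisfying all such instances would contradict the inconsistency of the row). Your fallback of ``re-choosing a realization of the retained column'' is circular: the case formula of every row was tailored to the original $a$, so a new realization of the shifted column need not fall into the same cases, and the whole analysis would have to be redone for it. The same issue undermines the fan ``remaining case'': the true remaining case is that $a$ lies in the residue ball of $c_{\alpha,0}$, which does not imply that $\phi_\alpha(x,b_{\alpha,i},c_{\alpha,i})$ forces $x$ into the ball of $c_{\alpha,i}$ (so your claimed equivalence is unjustified and your case split is not exhaustive), and your replacement column $\{v(x-c_{\alpha,0})=\gamma_\alpha\wedge\ac(x-c_{\alpha,0})=\ac(c_{\alpha,i}-c_{\alpha,0})\}_{i\geq 1}$ is not satisfied by $a$ at all, so its compatibility with the other rows' first instances is left unproved.

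The paper avoids all of this with one device your proposal is missing: by compactness, adjoin for each $\alpha$ a new element $c_{\alpha,\infty}$ such that $(c_{\alpha,0},c_{\alpha,1},\ldots,c_{\alpha,\infty})$ is indiscernible (placed at the front when the row is reverse pseudo-convergent) and the whole array remains mutually indiscernible, and take the constant sequence $c'_{\alpha,i}=c_{\alpha,\infty}$. Because the reference point is not an entry of the row, $(b_{\alpha,i},c_{\alpha,i},c_{\alpha,\infty})_{i<\omega}$ is automatically indiscernible over the other rows, the original $a$ still witnesses the first column (the case split compares $v(a-c_{\alpha,\infty})$ with $v(c_{\alpha,0}-c_{\alpha,\infty})$, with no degenerate term), no discarding or re-choosing is ever needed, and the one troublesome configuration -- $a$ closer to the reference than the row is -- is outright impossible, since then $c_{\alpha,\infty}$ itself would realize every instance of the row (the paper's Case 2). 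Your pseudo-limit exclusion and your Fact \ref{12-easy} substitutions are correct and correspond to the paper's Cases 1 and 3a--3c, but without the adjoined limit element the construction as you describe it does not assemble into an indiscernible inp-pattern.
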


\begin{proof}
	First, by applying Proposition \ref{singleton}, we may assume that each $\phi_\alpha$ has the form described in that proposition. We again wish to apply Lemma \ref{technical}. From the conclusion of Proposition \ref{singleton}, each $c_\alpha$ is an indiscernible sequence of singletons.
	
	For every $\alpha<\kappa$, fix an element $c_{\alpha,\infty}$ such that
	\begin{itemize}
		\item if $c_\alpha$ is pseudo-convergent or a fan, then $c_\alpha^+ = (c_{\alpha,0}, c_{\alpha,1}, c_{\alpha,2},\ldots,c_{\alpha,\infty})$ is indiscernible, and
		\item if $c_\alpha$ taken in the reverse order is pseudo-convergent, then $c_\alpha^+ = (c_{\alpha,\infty}, c_{\alpha,0}, c_{\alpha,1},\ldots)$ is indiscernible.
	\end{itemize}
	By compactness, we may assume that the set of sequences $\{(b_\alpha,c_\alpha^+) : \alpha<\kappa\}$ is mutually indiscernible. Take $c'_\alpha = (c'_{\alpha,i})_{i<\omega}$ to be the constant sequence $c'_{\alpha,i} = c_{\alpha,\infty}$ for all $i<\omega$ and all $\alpha<\kappa$.
	
	Let $z'_\alpha$ be a new variable symbol corresponding to $c'_\alpha$ and fix a realization $a$ of $\{\phi_\alpha(x,b_{\alpha,0},c_{\alpha,0}) : \alpha<\kappa\}$. To find the formulas $\psi_\alpha(x,y_\alpha,z_\alpha,z'_\alpha)$ needed for Lemma \ref{technical}, we split into cases based on the relationship between $v(a-c_{\alpha,\infty})$ and $v(c_{\alpha,0}-c_{\alpha,\infty})$.
	
	Fix $\alpha<\kappa$. For legibility, we will write $b_i,c_i,c_\infty$ in place of $b_{\alpha,i},c_{\alpha,i},c_{\alpha,\infty}$ in the cases below. We will clearly have $\K\models \psi_\alpha(a,b_0,c_0,c_\infty)$ by choice of $\psi_\alpha$ in each case. Once $\psi_\alpha$ is chosen, set $\Psi_\alpha(x) = \{\psi_\alpha(x,b_i,c_i,c_\infty) : i<\omega\}$.
	
	\newcase{1} If $v(a-c_\infty) < v(c_0-c_\infty)$ then $v(a-c_0) = v(a-c_\infty)$ and $\ac(a-c_0) = \ac(a-c_\infty)$ by Fact \ref{12-easy}. Let $\psi_\alpha(x,y_\alpha,z_\alpha,z'_\alpha)$ be the formula
	\[ v(x-z'_\alpha) < v(z_\alpha-z'_\alpha) \wedge \chi\big(v(x-z'_\alpha), y_\alpha^\VG\big) \wedge \rho\big(\ac(x-z'_\alpha), y_\alpha^\RF\big) .\]
	Note that any realization of $\Psi_\alpha(x)$ would also be a realization of $\{\phi_\alpha(x,b_i,c_i) : i<\omega\}$, so $\Psi_\alpha(x)$ is inconsistent.
	
	\newcase{2} If $v(a-c_\infty) > v(c_0-c_\infty)$ then by Fact \ref{12-easy}, $v(a-c_0) = v(c_\infty-c_0)$ and $\ac(a-c_0) = \ac(c_\infty - c_0)$, so $K\models \phi(c_\infty, b_0, c_0)$. Then by indiscernibility, $c_\infty$ realizes $\{\phi_\alpha(x,b_i,c_i) : i<\omega\}$, contradicting the inconsistency of that row of the inp-pattern. Thus, case 2 cannot occur.
	
	\newcase{3} Assume $v(a-c_\infty) = v(c_0-c_\infty)$. In this case, we need to split into subcases based on the form of the sequence $(c_i)_{i<\omega}$ and the relationship between $\ac(a-c_\infty)$ and $\ac(c_0-c_\infty)$.
	
	\newcase{3a} If $\ac(a-c_\infty) \neq \ac(c_0-c_\infty)$ then $v(a-c_0) = v(a-c_\infty) = v(c_0-c_\infty)$, so $\ac(a-c_0) = \ac(a-c_\infty) - \ac(c_0-c_\infty)$. Let $\psi_\alpha(x,y_\alpha,z_\alpha,z'_\alpha)$ be the formula
	\[ v(x-z'_\alpha) = v(z_\alpha-z'_\alpha) \wedge \ac(x-z'_\alpha) \neq \ac(z_\alpha-z'_\alpha) \]
	\[ \wedge \chi\big(v(x-z'_\alpha), y_\alpha^\VG\big) \wedge \rho\big(\ac(x-z'_\alpha)-\ac(z_\alpha-z'_\alpha), y_\alpha^\RF\big) .\]
	As in Case 1, note that any realization of $\Psi_\alpha(x)$ would also be a realization of $\{\phi_\alpha(x,b_i,c_i) : i<\omega\}$, so $\Psi_\alpha(x)$ is inconsistent.
	
	\newcase{3b} Suppose $(c_i)_{i<\omega}$ or its reversal is pseudo-convergent and let $\psi_\alpha(x,y_\alpha,z_\alpha,z'_\alpha)$ be the formula $v(x-z'_\alpha) = v(z_\alpha-z'_\alpha)$. It is easy to check that $c_\infty$ is a pseudo-limit of $(c_i)_{i<\omega}$ or its reversal, whichever is pseudo-convergent, and so $v(c_i-c_\infty) \neq v(c_j-c_\infty)$ whenever $i \neq j$. Thus, for any $d\in K$, it is impossible for $v(d-c_\infty)$ to be equal to both $v(c_i-c_\infty)$ and $v(c_j-c_\infty)$; in other words, $\Psi_\alpha(x)$ is inconsistent.
	
	\newcase{3c} Finally, by Fact \ref{sequences}, suppose $(c_i)_{i<\omega}$ is a fan and $\ac(a-c_\infty) = \ac(c_0-c_\infty)$. Let $\psi_\alpha(x,y_\alpha,z_\alpha,z'_\alpha)$ be the formula
	\[ v(x-z'_\alpha) = v(z_\alpha-z'_\alpha) \wedge \ac(x-z'_\alpha) = \ac(z_\alpha-z'_\alpha) .\]
	Since $c_\infty$ will be an element of the fan, $\ac(c_i-c_\infty) \neq \ac(c_j-c_\infty)$ for any $i\neq j$. Thus, for any $d\in K$, it is impossible for $\ac(d-c_\infty)$ to be equal to both $\ac(c_i-c_\infty)$ and $\ac(c_j-c_\infty)$, which means $\Psi_\alpha(x)$ is inconsistent.
	
	\bigskip\noindent As noted above, Case 2 cannot occur. In each other case, we have chosen $\psi_\alpha(x,y_\alpha,z_\alpha,z'_\alpha)$ so that $\Psi_\alpha(x)$ is inconsistent and $\K \models \psi_\alpha(a,b_{\alpha,0},c_{\alpha,0},c'_{\alpha,0})$. In addition, by choice of $c'_\alpha$, the array $\{(b_\alpha,c_\alpha,c'_\alpha) : \alpha<\kappa\}$ is mutually indiscernible. Thus, $\{\psi_\alpha(x,y_\alpha,z_\alpha,z'_\alpha), (b_\alpha,c_\alpha,c'_\alpha)\}_{\alpha<\kappa}$ is an indiscernible inp-pattern.
	
	Finally, the terms $t_\alpha^1 = v(z_\alpha-z'_\alpha)$ and $t_\alpha^2 = \ac(z_\alpha-z'_\alpha)$ satisfy the remaining conditions of Lemma \ref{technical}, and we obtain a new inp-pattern $((\phi'_\alpha(x,y_\alpha',z_\alpha'), (b'_\alpha,c'_\alpha))_{\alpha<\kappa}$ in which the $\VF$-sort parameter sequence of each row of the new inp-pattern is $c'_\alpha$, a constant sequence of singletons. Moreover, each $\psi_\alpha$ has the form described in Proposition \ref{singleton} by construction, and $\phi'_\alpha$ inherits this form since it is obtained from $\psi_\alpha$ through a substitution of terms. Thus, the new inp-pattern has the desired form.
\end{proof}


\begin{prop} \label{noshift}
	Assume $T$ and $\K$ are as above, and let $(\phi_\alpha(x,y_\alpha,z_\alpha), (b_\alpha,c_\alpha))_{\alpha<\kappa}$ be an indiscernible inp-pattern with $x$ a singleton in the valued field sort. Then we can construct a new indiscernible inp-pattern $(\phi'_\alpha(x,y'_\alpha), (b'_\alpha))_{\alpha<\kappa}$ of the same depth, such that for each $\alpha<\kappa$, the formula $\phi'_\alpha$ has the form
	\[ \chi_\alpha\big(v(x), (y'_\alpha)^\VG\big) \wedge \rho_\alpha\big(\ac(x), (y'_\alpha)^\RF\big) ,\]
	where $\chi_\alpha$ and $\rho_\alpha$ are formulas in $\Lvg$ and $\Lrf$, respectively.
\end{prop}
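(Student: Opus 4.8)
The plan is to begin by applying the preceding proposition, so that each $\phi_\alpha$ may be taken in the form
\[ \chi_\alpha\big(v(x-c_\alpha),(y_\alpha)^\VG\big)\wedge\rho_\alpha\big(\ac(x-c_\alpha),(y_\alpha)^\RF\big) \]
with $c_\alpha$ a single element of the valued field sort. I would then fix a realization $a$ of $\{\phi_\alpha(x,b_{\alpha,0},c_\alpha):\alpha<\kappa\}$ and, for each $\alpha$, rewrite the row in terms of $v(x)$ and $\ac(x)$ rather than $v(x-c_\alpha)$ and $\ac(x-c_\alpha)$, in such a way that $c_\alpha$ survives only inside the terms $v(c_\alpha)$ and $\ac(c_\alpha)$; Lemma~\ref{technical}, applied with those two terms, then deletes $c_\alpha$ from the pattern. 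Which rewriting is available depends on how $a$ sits relative to $c_\alpha$, and by Fact~\ref{12-easy} the relevant cases are distinguished by comparing $v(a)$ with $v(c_\alpha)$ and $\ac(a)$ with $\ac(c_\alpha)$.

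In the case $v(a)<v(c_\alpha)$, on the definable set $\{v(x)<v(c_\alpha)\}$ one has $v(x-c_\alpha)=v(x)$ and $\ac(x-c_\alpha)=\ac(x)$, so the formula to feed to Lemma~\ref{technical} is
\[ \psi_\alpha(x,y_\alpha,z_\alpha):=v(x)<v(z_\alpha)\ \wedge\ \chi_\alpha\big(v(x),(y_\alpha)^\VG\big)\ \wedge\ \rho_\alpha\big(\ac(x),(y_\alpha)^\RF\big); \]
indeed $a$ satisfies $\psi_\alpha(a,b_{\alpha,0},c_\alpha)$, and any realization of $\{\psi_\alpha(x,b_{\alpha,i},c_\alpha):i<\omega\}$ has valuation below $v(c_\alpha)$ and hence also realizes the inconsistent row $\{\phi_\alpha(x,b_{\alpha,i},c_\alpha):i\}$, so the array of $\psi_\alpha$'s is still an indiscernible inp-pattern of the same depth. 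The case $v(a)=v(c_\alpha)$, $\ac(a)\neq\ac(c_\alpha)$ is handled identically, using that on $\{v(x)=v(c_\alpha)\wedge\ac(x)\neq\ac(c_\alpha)\}$ Fact~\ref{12-easy} gives $v(x-c_\alpha)=v(x)$ and $\ac(x-c_\alpha)=\ac(x)-\ac(c_\alpha)$, so that $\psi_\alpha$ also involves the term $\ac(z_\alpha)$. Finally, the case $v(a)>v(c_\alpha)$ cannot occur: there $v(a-c_\alpha)=v(c_\alpha)$ and $\ac(a-c_\alpha)=-\ac(c_\alpha)$, so $0$ satisfies $\phi_\alpha(0,b_{\alpha,0},c_\alpha)$; since the constant sequence $c_\alpha$ is part of the mutually indiscernible array, $(b_{\alpha,i})_i$ is indiscernible over $c_\alpha$, hence over $v(c_\alpha),\ac(c_\alpha)$, so $0$ satisfies every column of row $\alpha$, contradicting its inconsistency. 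If every row falls into one of these cases, a single application of Lemma~\ref{technical} finishes the proof.

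The remaining case, $v(a)=v(c_\alpha)$ and $\ac(a)=\ac(c_\alpha)$ — equivalently $v(a-c_\alpha)>v(c_\alpha)$ — is the step I expect to be the main obstacle: here $a$ lies in the open ball around $c_\alpha$ of radius $v(c_\alpha)$, a set on which both $v(x)$ and $\ac(x)$ are constant, so they retain none of the information carried by $v(x-c_\alpha)$ and $\ac(x-c_\alpha)$, and no rewriting of the above kind exists. The plan for this case is first to split $\phi_\alpha$, via Fact~\ref{boolean}(1), into its restriction to the complement of this ball, which is rewritable as before, and its restriction to the ball; this reduces matters to rows that already imply $v(x-c_\alpha)>v(c_\alpha)$. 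Read in the variable $u=x-c_\alpha$, such a row is literally of the form $\chi_\alpha(v(u),(y_\alpha)^\VG)\wedge\rho_\alpha(\ac(u),(y_\alpha)^\RF)$ with $u$ confined to a ball about $0$, so the formula $\chi_\alpha(v(x),(y_\alpha)^\VG)\wedge\rho_\alpha(\ac(x),(y_\alpha)^\RF)$ is already an inconsistent row of the desired shape. The difficulty, and where the real work lies, is that replacing these rows is not a rewriting, so one must exhibit afresh a single element realizing the modified first column of the whole pattern at once; for that I would use Fact~\ref{orthogonal} to prescribe the valuation and angular component of a realization independently, together with the observation that every center $c_\alpha$ arising in this case shares the valuation and angular component of $a$, and then argue that these choices can be made compatibly with the already rewritten remaining rows. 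Carrying this assembly out and checking that the resulting array is still an indiscernible inp-pattern of full depth is the crux of the proposition.
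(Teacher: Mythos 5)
Your treatment of the first three configurations is correct and is essentially the paper's rewriting technique (the case $v(a)>v(c_\alpha)$ is indeed impossible, and the cases $v(a)<v(c_\alpha)$ and $v(a)=v(c_\alpha),\ \ac(a)\neq\ac(c_\alpha)$ feed into Lemma~\ref{technical} exactly as you say). But the remaining case $v(a-c_\alpha)>v(a)=v(c_\alpha)$ is the entire content of the proposition, and what you offer for it is a plan, not a proof --- and the plan as sketched does not go through. Replacing row $\alpha$ by $\chi_\alpha(v(x),\cdot)\wedge\rho_\alpha(\ac(x),\cdot)$ is not a pointwise rewriting (the two formulas are not equivalent on any definable set containing $a$), so the consistency of the new first column together with the rewritten rows has to be proved afresh; the original witness $a$ no longer serves, since it satisfies $\chi_\alpha$ at the value $v(a-c_\alpha)$, which is strictly larger than $v(a)$, and $\rho_\alpha$ at $\ac(a-c_\alpha)$, which need not equal $\ac(a)$. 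Moreover, without further argument distinct ``hard'' rows may sit at different levels $v(a-c_\alpha)$ and impose their $\rho_\alpha$ at different residues, while the rewritten rows carry side conditions such as $v(x)=v(c_\beta)$ tying $v(x)$ to $v(a)$. Fact~\ref{orthogonal} only lets you prescribe the value and the angular component of a single element independently of \emph{each other}; it cannot reconcile competing $\Lvg$-constraints coming from different rows, nor competing $\Lrf$-constraints. So the ``assembly'' you defer is precisely the unsolved statement.

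The missing idea is a preliminary normalization that makes the hard case vanish. First, for $\alpha\neq\beta$, if $v(a-c_\alpha)<v(a-c_\beta)$ then Fact~\ref{12-easy} gives $v(a-c_\alpha)=v(c_\beta-c_\alpha)$ and $\ac(a-c_\alpha)=\ac(c_\beta-c_\alpha)$, so $c_\beta$ satisfies $\phi_\alpha(x,b_{\alpha,0},c_\alpha)$, and by mutual indiscernibility (the sequence of row $\alpha$ is indiscernible over $c_\beta$) it satisfies the whole of row $\alpha$, contradicting its inconsistency; hence $v(a-c_\alpha)$ is the \emph{same} for all $\alpha$. Second, translate: replace each $c_\alpha$ by $c_\alpha-c_0$ and $a$ by $a-c_0$, which is legitimate because every row is indiscernible over $c_0$, so the translated array is still an indiscernible inp-pattern realized by $a-c_0$. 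After this re-centering $v(a-c_\alpha)=v(a-c_0)=v(a)$ for every $\alpha$, so the configuration $v(a-c_\alpha)>v(a)$ that blocks you never occurs: only your first two cases survive, the surviving dependence on $c_\alpha$ is through the terms $v(z_\alpha)$ and $\ac(z_\alpha)$ alone (with $\rho_\alpha$ applied to $\ac(x)$ or to $\ac(x)-\ac(z_\alpha)$ according to whether $v(a)<v(c_\alpha)$ or $v(a)=v(c_\alpha)$), and a single application of Lemma~\ref{technical} with those terms finishes the proof. In particular it is the translation, not Fact~\ref{orthogonal}, that reconciles the residue constraints across rows.
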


\begin{proof}
	From the previous propositions, we may assume each $\phi_\alpha(x,y_\alpha,z_\alpha)$ has the form
	\[ \chi_\alpha\big(v(x-z_\alpha), y_\alpha^\VG\big) \wedge \rho_\alpha\big(\ac(x-z_\alpha), y_\alpha^\RF\big) \]
	and that for each $\alpha<\kappa$, $z_\alpha$ is a singleton and $c_\alpha$ is a constant sequence. Throughout this proof, we will identify a constant sequence with its value. We will again apply Lemma \ref{technical}. Let $a$ be some realization of $\{\phi_\alpha(x,b_{\alpha,0},c_\alpha) : \alpha<\kappa\}$.
	
	For any $\alpha,\beta<\kappa$ such that $v(a-c_\alpha) < v(a-c_\beta)$, we have $v(a-c_\alpha) = v(c_\beta-c_\alpha)$ and $\ac(a-c_\alpha) = \ac(c_\beta-c_\alpha)$ by Fact \ref{12-easy}. Then, since $K\models \phi_\alpha(a, b_{\alpha,0}, c_\alpha)$, we have $K\models \phi_\alpha(c_\beta, b_{\alpha,0}, c_\alpha)$. But then by mutual indiscernibility, $K\models \phi_\alpha(c_\beta, b_{\alpha,i}, c_\alpha)$ for all $i<\omega$, contradicting the inconsistency of the row $\alpha$.
	
	Thus, $v(a-c_\alpha)$ is constant for all $\alpha<\kappa$; in particular, it is equal to $v(a-c_0)$. For each $\alpha$, let $c'_\alpha = c_\alpha-c_0$, and let $a' = a-c_0$. Since $(b_{\alpha,i},c_{\alpha,i})_{i<\omega}$ is indiscernible over $c_0$ for all $\alpha<\kappa$ (including $\alpha=0$, since $c_0=c_{0,i}$ for all $i<\omega$), the array obtained by replacing $c_\alpha$ with $c'_\alpha$ is still an inp-pattern, and $a'$ will be a realization of the first column. To simplify notation, assume that $c_0 = 0$, so $a'=a$ and $c'_\alpha = c_\alpha$.
	
	Now $v(a-c_\alpha) = v(a-c_0) = v(a)$ for all $\alpha<\kappa$, and so $\ac(a-c_\alpha)$ equals either $\ac(a)$ or $\ac(a) - \ac(c_\alpha)$, depending on whether $v(a) < v(c_\alpha)$ or $v(a) = v(c_\alpha)$; the case where $v(a)>v(c_\alpha)$ is impossible since $v(a-c_\alpha) = v(a)$. We again split into cases in order to define formulas $\psi_\alpha(x,y_\alpha,z_\alpha,z'_\alpha)$ for $\alpha<\kappa$.
	
	\newcase{1} If $v(a) < v(c_\alpha)$, take $\psi(x,y_\alpha,z_\alpha)$ to be the formula
	\[ v(x) < v(z_\alpha) \wedge \chi(v(x), y_\alpha^\VG) \wedge \rho(\ac(x), y_\alpha^\RF) .\]
	
	\nogapcase{2} If $v(a) = v(c_\alpha)$, take $\psi_\alpha(x,y_\alpha,z_\alpha)$ to be the formula
	\[ v(x) = v(z_\alpha) \wedge \chi(v(x), y_\alpha^\VG) \wedge \rho(\ac(x) - \ac(z_\alpha), y_\alpha^\RF) .\]
	
	\noindent In either case, $\K\models \psi_\alpha(a,b_{\alpha,0},c_\alpha)$ and any realization of $\Psi_\alpha = \{\psi_\alpha(x,b_{\alpha,i}, c_{\alpha,i}) : i<\omega\}$ would also be a realization of $\{\phi_\alpha(x,b_{\alpha,i},c_{\alpha,i}) : i<\omega\}$. Thus, $\Psi_\alpha$ is inconsistent and $\{\psi_\alpha(x,y_\alpha,z_\alpha), (b_\alpha,c_\alpha)\}_{\alpha<\kappa}$ is an indiscernible inp-pattern. Take $v(z_\alpha)$ and $\ac(z_\alpha)$ for the terms $t_\alpha^i$.
	
	Then, setting $z'_\alpha$ and $c'_\alpha$ to be empty tuples, we may apply Lemma \ref{technical} to obtain a new inp-pattern $((\phi'_\alpha(x,y_\alpha'), (b'_\alpha))_{\alpha<\kappa}$ with no VF-sort parameter sequences, and in which each formula has the desired form.
\end{proof}

Now that we can reduce to inp-patterns with no $\VF$-sort parameters, we can prove the main theorem.

\begin{theorem} \label{mainthm}
	Suppose $T$ is a theory of henselian valued fields in $\Lpas$ admitting relative quantifier elimination. Then
	\[ \bdn(T) = \bdn(T_{\VG}) + \bdn(T_{\RF}) ,\]
	where $T_{\VG}$ and $T_{\RF}$ are the induced theories on the value group and residue field, respectively.
\end{theorem}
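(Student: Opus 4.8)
The plan is to prove the equality by establishing the two inequalities $\bdn(T)\ge\bdn(T_{\VG})+\bdn(T_{\RF})$ and $\bdn(T)\le\bdn(T_{\VG})+\bdn(T_{\RF})$ separately. Before doing either, I would record the reductions that make both halves manageable: the definable surjections $v:K^\times\twoheadrightarrow vK$ and $\ac:K\twoheadrightarrow Kv$ let us pull back any inp-pattern in the $\VG$- or $\RF$-sort to one of the same depth in $\VF$, so it suffices to work with patterns whose home variable $x$ is a singleton in $\VF$; and since $\K$ is sufficiently saturated, $vK$ and $Kv$ are sufficiently saturated models of $T_{\VG}$ and $T_{\RF}$ whose induced structure is exactly their $\Lvg$- and $\Lrf$-reducts (this last point is a direct consequence of relative quantifier elimination, since a formula in only $\VG$-variables is a boolean combination of $\Lvg$-formulas applied to terms $v(f(\cdot))$ evaluated at parameters), so burden in $T_{\VG}$ and $T_{\RF}$ may be witnessed inside $vK$ and $Kv$.

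For the lower bound I would take an inp-pattern of depth $\kappa_1$ in $T_{\VG}$ witnessed in $vK$ and one of depth $\kappa_2$ in $T_{\RF}$ witnessed in $Kv$, reindex so the two index sets are disjoint, and assemble a single pattern in $T$: a row $\chi_\alpha(w,y_\alpha)$ coming from $T_{\VG}$ becomes $\chi_\alpha(v(x),y_\alpha)$ and a row $\rho_\beta(u,y_\beta)$ coming from $T_{\RF}$ becomes $\rho_\beta(\ac(x),y_\beta)$, keeping the original parameter rows. Because $v$ and $\ac$ are functions, a consistent size-$k$ subset of a pulled-back row would push forward to a consistent size-$k$ subset of the original row, so $k$-inconsistency of each row is preserved; and a path of the combined pattern consists of a consistent system of conditions on $v(x)$ together with a consistent system of conditions on $\ac(x)$, which by the orthogonality in Fact~\ref{orthogonal} is realized simultaneously by a single element of $K$ (one handles separately the degenerate possibility that a path-type forces $v(x)=\infty$ or $\ac(x)=0$, which is excluded by the inconsistency of the rows). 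This yields an inp-pattern in $T$ of depth $\kappa_1+\kappa_2$, and taking suprema gives $\bdn(T)\ge\bdn(T_{\VG})+\bdn(T_{\RF})$.

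For the upper bound I would start from an indiscernible inp-pattern in $T$ with $x$ a singleton in $\VF$ and apply Proposition~\ref{singleton}, then the proposition immediately following it, then Proposition~\ref{noshift}, reducing to a pattern with \emph{no} $\VF$-sort parameters in which every formula has the shape $\chi_\alpha\big(v(x),y_\alpha^\VG\big)\wedge\rho_\alpha\big(\ac(x),y_\alpha^\RF\big)$ with $\chi_\alpha\in\Lvg$, $\rho_\alpha\in\Lrf$. The key remaining step is that each row can be ``blamed'' on one sort: if the projected rows $\{\chi_\alpha(w,b_{\alpha,i}^\VG)\}_{i<\omega}$ in $vK$ and $\{\rho_\alpha(u,b_{\alpha,i}^\RF)\}_{i<\omega}$ in $Kv$ were both consistent, one could realize them by $\gamma\in vK$ and $r\in Kv$ and then, by Fact~\ref{orthogonal}, find $a\in K$ with $v(a)=\gamma$ and $\ac(a)=r$, which would realize the whole $\alpha$-th row and contradict its inconsistency. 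Letting $S_{\VG}$ (resp.\ $S_{\RF}$) be the set of rows whose $\VG$- (resp.\ $\RF$-) projection is inconsistent, we get $S_{\VG}\cup S_{\RF}=\kappa$; the subpattern indexed by $S_{\VG}$, equipped with the $\Lvg$-formulas $\chi_\alpha$ and the $\VG$-parameter rows, is an indiscernible inp-pattern in $T_{\VG}$ (rows inconsistent by construction, mutual indiscernibility inherited, and each path consistent since applying $v$ to a realization of the corresponding $\VF$-path realizes it), and symmetrically for $S_{\RF}$ in $T_{\RF}$. Hence $\kappa=|S_{\VG}\cup S_{\RF}|\le|S_{\VG}|+|S_{\RF}|\le\bdn(T_{\VG})+\bdn(T_{\RF})$, and taking the supremum over all inp-patterns in $T$ finishes the upper bound, hence the equality.

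I expect the genuine difficulty to be concentrated in the three reduction propositions (already proved in this section); within this proof the main obstacle is the ``row-splitting'' step just described, which is exactly where Fact~\ref{orthogonal} is indispensable, together with the bookkeeping of the degenerate values $\infty$ and $0$. The remaining delicacy is cardinal-arithmetic: to also deduce the transfer of ``strong'' and ``finite burden'' one observes that both the lower-bound construction and the upper-bound decomposition send finite patterns to finite patterns, so a bound on $\bdn(T_{\VG})+\bdn(T_{\RF})$ forces a bound on the depth of any pattern in $T$ and conversely; here one must keep in mind the subtlety noted in Section~\ref{inppatterns} about whether a burden of $\aleph_0$ is attained by a single pattern.
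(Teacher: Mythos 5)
Your proposal is correct and follows essentially the same route as the paper: reduce to a VF-sort singleton variable, apply Propositions \ref{singleton}--\ref{noshift} to obtain rows of the form $\chi_\alpha(v(x),\cdot)\wedge\rho_\alpha(\ac(x),\cdot)$ with no VF-parameters, split each row into the sort whose projection is inconsistent using Fact \ref{orthogonal} for the upper bound, and pull back patterns from $vK$ and $Kv$ via $v$ and $\ac$, again using Fact \ref{orthogonal}, for the lower bound. Your explicit attention to the degenerate values $\ac(x)=0$ and $v(x)=\infty$ (handled, e.g., by passing to indiscernible rows) is a point the paper's own lower-bound argument silently glosses over, so it is a welcome refinement rather than a deviation.
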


\begin{proof}
	We begin by showing that $\bdn(T) \leq \bdn(T_\VG) + \bdn(T_\RF)$. Suppose that $(\phi_\alpha(x,y_\alpha), b_\alpha)_{\alpha<\kappa}$ is an indiscernible inp-pattern for $T$. If $x$ is a $\VG$-sort variable then we can obtain a new inp-pattern $(\phi'_\alpha(x',y_\alpha), b_\alpha)_{\alpha<\kappa}$ with $x'$ a $\VF$-sort variable by taking $\phi'_\alpha(x',y_\alpha) = \phi_\alpha(v(x'),y_\alpha)$. A similar substitution with $\ac(x')$ can replace an $\RF$-sort variable with a $\VF$-sort variable.
	
	Thus, we may assume without loss of generality that $x$ is in the valued field sort. By Proposition \ref{noshift}, we may further assume that for each $\alpha<\kappa$, $y_\alpha$ has no $\VF$-sort component and $\phi_\alpha(x,y_\alpha)$ has the form
	\[ \chi_\alpha\big(v(x),y_\alpha^\VG\big) \wedge \rho_\alpha\big(\ac(x), y_\alpha^\RF\big) \]
	where $\chi_\alpha\in \Lvg$ and $\rho_\alpha\in \Lrf$.
	
	Suppose that for some $\alpha<\kappa$, the sets $X_\alpha(x) = \{\chi_\alpha(v(x), b_{\alpha,i}^\VG) : i<\omega\}$ and $P_\alpha(x) = \{\rho_\alpha(\ac(x), b_{\alpha,i}^\RF) : i<\omega\}$ are both consistent, say they are realized by elements $c$ and $d$, respectively. Then by Fact \ref{orthogonal}, there exists an element $a$ with $v(a) = v(c)$ and $\ac(a) = \ac(d)$. But then $a$ would be a realization of $X_\alpha(x) \cup P_\alpha(x)$, and so would also be a realization of $\{\phi_\alpha(x, b_{\alpha,i}) : i<\omega\}$, contradicting the inconsistency of the row.
	
	Thus, we can write $\kappa = G\cup R$, where $\alpha\in G$ if $X_\alpha(x)$ is inconsistent, and $\alpha\in R$ if $P_\alpha(x)$ is inconsistent. Then for new variable symbols $z$ and $w$, $(\chi_\alpha(z,y_\alpha^\VG), b_\alpha^\VG)_{\alpha\in G}$ is an inp-pattern in $vK$ and $(\rho_\alpha(w,y_\alpha^\RF), b_\alpha^\RF)_{\alpha\in R}$ is an inp-pattern in $Kv$, so
	\[ \kappa = G\cup R = |G\cup R| \leq |G| + |R| \leq \bdn(T_{\VG}) + \bdn(T_{\RF}) .\]
	Since $\bdn(T)$ is the supremum of all such $\kappa$, we have $\bdn(T) \leq \bdn(T_{\VG}) + \bdn(T_{\RF})$.
	
	For the reverse inequality, let $(\chi_\alpha(z,y_\alpha), b_\alpha, k_\alpha)_{0 \leq \alpha < \kappa}$ and $(\rho_\alpha(w,y_\alpha), b_\alpha, k_\alpha)_{\kappa \leq \alpha < \lambda}$ be inp-patterns for $T_\VG$ and $T_\RF$; we do not make any assumption of indiscernibility. For each $0\leq\alpha < \kappa$, let $\phi_\alpha(x,y_\alpha)$ be the formula $\chi_\alpha(v(x),y_\alpha)$, and for each $\kappa \leq \alpha<\lambda$, let $\phi_\alpha(x,y_\alpha)$ be the formula $\rho_\alpha(\ac(x), y_\alpha)$. We claim that $(\phi_\alpha(x,y_\alpha), b_\alpha, k_\alpha)_{0\leq \alpha<\lambda}$ is an inp-pattern for $K$.
	
	First, note that each row is $k_\alpha$-inconsistent, since we started with inp-patterns for $T_\VG$ and $T_\RF$. Fix any function $\eta: \lambda\to\omega$. If $\gamma\in vK$ and $c\in Kv$ are realizations of $\{\chi_\alpha(z,b_{\alpha,\eta(\alpha)}) : 0\leq \alpha<\kappa\}$ and $\{\rho_\alpha(w,b_{\alpha,\eta(\alpha)}) : \kappa\leq \alpha<\lambda\}$, respectively, then any element $a\in K$ with $v(a) = \gamma$ and $\ac(a) =c$ will realize $\{\phi_\alpha(x,b_{\alpha,\eta(\alpha)}) : 0\leq \alpha < \lambda\}$.
	
	Thus, $(\phi_\alpha(x,y_\alpha), b_\alpha, k_\alpha)_{0\leq \alpha<\lambda}$ is an inp-pattern for $T$, which means $\lambda \leq \bdn(T)$. Since $\lambda$ is the sum of the depths of arbitrary inp-patterns for $T_\VG$ and $T_\RF$, taking the supremum over all such inp-patterns yields $\bdn(T_\VG) + \bdn(T_\RF) \leq \bdn(T)$, completing the proof.
\end{proof}

As an immediate consequence of the theorem, we get that \ntp transfers from $T_{\VG}$ and $T_{\RF}$ to $T$. This generalizes \cite[Theorem 7.6]{Cher14} from equicharacteristic zero to any characteristic, provided the theory has relative quantifier elimination.

\begin{cor}
	Let $T$ be a theory of henselian valued fields in $\Lpas$ admitting relative quantifier elimination. Then $T$ is \ntp if and only if $T_{\VG}$ and $T_{\RF}$ are. The same is true for the statements ``$T$ is strong'' and ``$T$ has finite burden.''
\end{cor}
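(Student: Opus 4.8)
All three equivalences are consequences of Theorem \ref{mainthm} once one recalls how \ntp, finite burden, and strength are read off from burden, together with elementary facts about the addition of extended cardinals; the only point requiring care is the ``strong'' case. For \ntp, recall from Section \ref{inppatterns} that a theory $S$ is \ntp exactly when $\bdn(S)\neq\infty$. Since $\bdn(T)=\bdn(T_\VG)+\bdn(T_\RF)$ by Theorem \ref{mainthm}, and a sum of two extended cardinals equals $\infty$ if and only if one of the summands does, $\bdn(T)\neq\infty$ if and only if $\bdn(T_\VG)\neq\infty$ and $\bdn(T_\RF)\neq\infty$; that is, $T$ is \ntp if and only if $T_\VG$ and $T_\RF$ both are. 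For finite burden the argument is identical with $\infty$ replaced by $\aleph_0$, since a sum of two cardinals is finite if and only if both summands are.

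For ``strong,'' the equation $\bdn(T)=\bdn(T_\VG)+\bdn(T_\RF)$ is not by itself enough, since $\bdn=\aleph_0$ is compatible both with the existence and with the non-existence of an inp-pattern of depth $\aleph_0$; so I would argue directly from the constructions of Section \ref{calculating}. Recall that $S$ is strong exactly when $\{x=x\}$ admits no inp-pattern of infinite depth. Suppose $T_\VG$ and $T_\RF$ are strong and, for contradiction, that $T$ has an inp-pattern of some infinite depth $\kappa$; refining it to an indiscernible inp-pattern and then applying Propositions \ref{singleton} through \ref{noshift}, we obtain an indiscernible inp-pattern for $T$ of the same depth $\kappa$ whose formulas have the form $\chi_\alpha(v(x),y_\alpha^\VG)\wedge\rho_\alpha(\ac(x),y_\alpha^\RF)$ with no $\VF$-sort parameters. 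As in the proof of Theorem \ref{mainthm}, partition $\kappa=G\sqcup R$ so that the rows indexed by $G$ form an inp-pattern for $T_\VG$ and those indexed by $R$ form an inp-pattern for $T_\RF$; since $\kappa$ is infinite, at least one of $G$, $R$ is infinite, giving an infinite inp-pattern for $T_\VG$ or for $T_\RF$, a contradiction.

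Conversely, if $T_\VG$ (say) is not strong, fix an inp-pattern for $T_\VG$ of infinite depth; the second half of the proof of Theorem \ref{mainthm}, applied with this pattern in the $\VG$-sort and the empty pattern in the $\RF$-sort, replaces each $\VG$-formula $\chi_\alpha(z,\cdot)$ by $\chi_\alpha(v(x),\cdot)$ to yield an inp-pattern for $T$ of the same infinite depth, so $T$ is not strong; the $T_\RF$ case is symmetric. Hence $T$ is strong if and only if $T_\VG$ and $T_\RF$ are. The only real obstacle in the whole argument is precisely this: ``strong'' must be tracked through the explicit inp-pattern constructions of Section \ref{calculating} rather than inferred from the cardinal $\bdn(T)$ alone, whereas the \ntp and finite-burden clauses are just cardinal arithmetic on top of Theorem \ref{mainthm}.
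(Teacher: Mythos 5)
Your proposal is correct and takes essentially the paper's route: the paper states the corollary as an immediate consequence of Theorem \ref{mainthm}, and your argument—cardinal arithmetic for the \ntp and finite-burden clauses, plus decomposing and recombining inp-patterns exactly as in the two halves of the theorem's proof for the ``strong'' clause—is precisely the intended filling-in of that claim. Your point that strength at burden $\aleph_0$ cannot be read off from the equation $\bdn(T)=\bdn(T_\VG)+\bdn(T_\RF)$ alone, and must instead be tracked through the explicit depth-preserving constructions of Section \ref{calculating}, is a correct and worthwhile precision that the paper leaves implicit.
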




Note that equality only holds in the theorem when working in the Denef-Pas language; if $T'$ is the theory of a reduct of a model of $T$ (for example, if $T'$ is the usual one-sorted valued field language $\Ldiv$), then we only have the inequality
\[ \bdn(T') \leq \bdn(T) = \bdn(T_{\VG}) + \bdn(T_{\RF}) .\]

We know that both $\bdn(T') < \bdn(T)$ and $\bdn(T') = \bdn(T)$ are possible, depending on the choice of $T$ and $T'$:
\begin{enumerate}
	\item The burden of ACVF in $\Ldiv$ is 1, but the burden of ACVF in $\Lpas$ is 2.
	\item The burden of $\Th(\mathbb Q_p)$ is 1 in both $\Ldiv$ and $\Lpas$, since the residue field of $\mathbb Q_p$ is finite.
\end{enumerate}
It is not known whether there is a valued field $K$ with infinite residue field where equality holds in a reduct of the Denef-Pas language. This will certainly happen if the angular component map is definable in $\Ldiv$, but may occur in other situations as well.

\bibliographystyle{alpha}
\bibliography{../References}

\end{document}